\newtheorem{theorem}{Theorem}[section]
\newtheorem{corollary}[theorem]{Corollary}
\newtheorem{lemma}[theorem]{Lemma}
\newtheorem*{conjecture}{Conjecture}
\numberwithin{equation}{section}
\definecolor{RED}{rgb}{1,0,0}\definecolor{BLUE}{rgb}{0,0,1} %DIF PREAMBLE
\begin{document}
\title{On Weissler's conjecture on the Hamming cube I}
\author[P.~Ivanisvili and  F.~Nazarov]{P.~Ivanisvili and   F.~Nazarov}
%\thanks{The first author was partially supported by the NSF grants DMS-1856486 and CAREER-DMS-2052865; the second author was partially supported by the NSF grant DMS-1600239.}
\address{Department of Mathematics, North Carolina State University, Raleigh, NC, USA}
\email{pivanis@ncsu.edu \textrm{(P.\ Ivanisvili)}}

\address{Department of Mathematical Sciences, Kent State University, Kent, OH, USA}
\email{nazarov@math.kent.edu \textrm{(F.\ Nazarov)}}
\makeatletter
\@namedef{subjclassname@2010}{
  \textup{2010} Mathematics Subject Classification}
\makeatother
\subjclass[2010]{39B62, 42B35, 47A30}
\keywords{}
\begin{abstract} 
Let $1\leq p \leq q <\infty$, and let $w \in \mathbb{C}$.  Weissler conjectured 
that the Hermite operator $e^{w\Delta}$ is bounded as an operator from $L^{p}$ to $L^{q}$ on the Hamming cube $\{-1,1\}^{n}$ with the norm bound independent of $n$ if and only if 
\begin{align*}
|p-2-e^{2w}(q-2)|\leq p-|e^{2w}|q.
\end{align*}
It was proved by Bonami (1970), Beckner (1975),  and Weissler (1979) in all cases except $2<p\leq q <3$ and $3/2<p\leq q <2$, which stood open until now. The goal of this paper is to  give a full proof of Weissler's conjecture in the case $p=q$.  Several applications will be presented.
\end{abstract}
\maketitle

\section{Introduction}
\subsection{Complex hypercontractivity}
Given $n\geq 1$, let $\{-1,1\}^{n}$ be the Hamming cube of dimension $n$, i.e., the set of vectors $x=(x_{1}, \ldots, x_{n})$ such that $x_{j}=1$ or $-1$ for all $j=1, \ldots, n$. For any  $f:\{-1,1\}^{n} \to \mathbb{C}$, define its average value $\mathbb{E}f$ and its $L_{p}$ norm  $\|f\|_{p}$,  $p\geq 1$, to be 
\begin{align*}
\mathbb{E} f  = \frac{1}{2^{n}} \sum_{x \in \{-1,1\}^{n}} f(x) \quad \text{and} \quad \|f\|_{p} = \left(\mathbb{E} |f|^{p}\right)^{1/p}.
\end{align*}
Functions on the Hamming cube can be represented via Fourier--Walsh series. Namely, for any $f:\{-1,1\}^{n} \to \mathbb{C}$, we have 
\begin{align}\label{fur}
f(x) = \sum_{S \subset \{1,\ldots,n\}} a_{S} w_{S}(x), \quad \text{where} \quad w_{S}(x) = \prod_{j \in S}x_{j}
\end{align}
 and $a_{S}$ are the Fourier coefficients of $f$. It follows from (\ref{fur}) that  $a_{S} = \mathbb{E}fw_{S}$ and $a_{\varnothing} = \mathbb{E}f$. For any $z \in \mathbb{C},$ the Hermite\footnote{When $z=e^{-t}$, $t \geq 0$, the traditional notation for the Hermite operator is $e^{-t\Delta}$ instead of $T_{e^{-t}}$. In the quantum field literature $T_{z}$ is called the second quantization operator of $z$. In computer science $T_{z}$ is referred to as the noise operator.} operator $T_{z}$ is defined as
 \begin{align*}
  T_{z}f(x) = \sum_{S \subset \{1,\ldots,n\}} z^{|S|}a_{S} w_{S}(x),
 \end{align*}
 where $|S|$ denotes the cardinality of the set $S \subset \{1,\ldots, n\}$. Weissler~\cite{W1979} made the following
 
 \begin{conjecture}
 Let $1\leq p \leq q <\infty$, and let $z \in \mathbb{C}$. We have 
 \begin{align*}
 \sup_{\|f\|_{p}=1, \, n\geq 1} \|T_{z} f\|_{q}  =  C(p,q,z) <\infty
 \end{align*}
if and only if 
 \begin{align}\label{inf13}
 |p-2-z^{2}(q-2)|\leq p-|z|^{2}q.
 \end{align}
 Moreover, $C(p,q,z)<\infty$ implies $C(p,q,z) = 1$, i.e., that $T_{z}$ is contractive.
 \end{conjecture}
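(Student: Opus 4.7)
The plan is to address, in order of increasing difficulty, the three components of the statement: the automatic contractivity clause, necessity of (\ref{inf13}), and sufficiency.

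Automatic contractivity follows from a tensor-power argument. Writing $C_n$ for the norm of $T_z:L^p\to L^q$ on $\{-1,1\}^n$, the product structure of the Hamming cube gives $\|T_z^{\otimes k}(f^{\otimes k})\|_q=\|T_z f\|_q^k$ and $\|f^{\otimes k}\|_p=\|f\|_p^k$, so $C_{nk}\geq C_n^k$. A uniform bound $\sup_m C_m=A<\infty$ then forces $C_n\leq A^{1/k}$ for every $k$, hence $C_n\leq 1$.

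For necessity, I would use the test functions $f_m=\prod_{j=1}^{m}(1+\alpha x_j)$ with $\alpha$ small and $m\to\infty$. By the central limit theorem, suitably normalized $f_m$ converges in distribution to the Gaussian exponential $\exp(\alpha G-\alpha^2/2)$, while $T_z f_m=\prod_{j=1}^{m}(1+\alpha z x_j)$ converges to the image of that same Gaussian exponential under the Hermite semigroup. The inequality $\|T_z f_m\|_q\leq \|f_m\|_p$ then passes to a Gaussian contractivity inequality, whose sharp region of validity (computed classically by Weissler and Epperson for the complex Hermite semigroup on Gaussian space) is precisely (\ref{inf13}).

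For sufficiency, the central structural tool is the semigroup identity $T_{z_1}\circ T_{z_2}=T_{z_1z_2}$, which yields $\Omega_{p,r}\cdot\Omega_{r,q}\subseteq \Omega_{p,q}$, where $\Omega_{a,b}$ denotes the set of $z$ with $\|T_z\|_{L^a\to L^b}\leq 1$. I would split the task into the diagonal case $p=q$ (the main theorem of the present paper) and an off-diagonal reduction. For $p<q$ and any $z$ satisfying (\ref{inf13}), I would factor $z=\zeta\rho$ with $\rho\in(0,1]$ real and $\zeta\in\mathbb{C}$, arranging that $T_\rho:L^p\to L^q$ is a classical Bonami--Beckner contraction (valid whenever $\rho^2\leq (p-1)/(q-1)$) and $\zeta$ lies in $\Omega_{q,q}$; a direct examination of the geometry of (\ref{inf13}) shows such a factorization exists throughout the region. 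Then $T_z=T_\zeta\circ T_\rho$ is a composition of two contractions.

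The main obstacle is the diagonal step $\Omega_{p,p}$. One reduces it to the single-variable two-point inequality $\|a+bzx\|_p\leq \|a+bx\|_p$ for all $a,b\in\mathbb{C}$, and then tensorizes via Minkowski's inequality in $L^p$. The difficulty is the complex phase geometry: for $p\in(3/2,2)\cup(2,3)$, the rotation-averaging and power-mean reductions that Weissler used outside this range fail to deliver the sharp constant, and one is forced to construct a genuine Bellman function adapted to the circular symmetry in $z$ of the problem. This is the step where I expect essentially all of the technical work of the paper to concentrate.
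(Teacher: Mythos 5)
Your tensor-power upgrade to contractivity and the reduction to the one-dimensional two-point inequality coincide with Lemma~\ref{equi1}, and your CLT route to necessity, while workable, is heavier than what is done here: the paper simply expands the two-point inequality at $w=\varepsilon v$, $\varepsilon\to0$, and matches second-order terms (Section~\ref{oori}), with no Gaussian input. The genuine gap is in your sufficiency argument, in two places. First, the off-diagonal factorization you claim ``exists throughout the region'' does not exist. Take any $2\le p<q$ and the purely imaginary point $z=i/\sqrt{q-1}$: it satisfies (\ref{inf13}) with equality, so it lies in $\Omega_{p,q}$; but if $z=\zeta\rho$ with $0<\rho\le\sqrt{(p-1)/(q-1)}$ (the sharp real Bonami--Beckner constraint for $L^{p}\to L^{q}$), then $\zeta$ is purely imaginary with $|\zeta|\ge 1/\sqrt{p-1}$, whereas by (\ref{lens}) (with $q$ in place of $p$) the purely imaginary points of $\Omega_{q,q}$ have modulus at most $1/\sqrt{q-1}<1/\sqrt{p-1}$. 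Hence $T_{\zeta}$ is not an $L^{q}\to L^{q}$ contraction and the composition $T_{\zeta}\circ T_{\rho}$ cannot reach $z$. The reversed factorization (first $T_{\zeta}$ with $\zeta\in\Omega_{p,p}$, then real $T_{\rho}:L^{p}\to L^{q}$) fails as well: for instance at $p=3$, $q=6$, along the ray $\arg z=\pi/4$ the radial extent of $\Omega_{p,q}$ strictly exceeds that of $\sqrt{(p-1)/(q-1)}\,\Omega_{p,p}$. This is precisely why $p<q$ is not a formal consequence of the diagonal case; the present paper proves only $p=q$ and defers $p<q$ to a separate manuscript requiring substantial additional work.

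Second, for the diagonal case itself --- which is the entire content of the paper --- your proposal contains no argument: ``construct a genuine Bellman function adapted to the circular symmetry'' names the difficulty but is not a proof, and it is also not what the paper does. After the reductions you list, the paper parametrizes the boundary of the lens by $z=r(t)e^{it}$, restricts to $0\le a\le a+t\le\pi/2$ (Lemma~\ref{dadis}) and, by a self-improvement step, to $c(t)y\le1$ (Lemma~\ref{dadis2}); it then expands (\ref{in3}) in powers of $2y/(1+y^{2})$, controls the whole tail $\ell\ge2$ of the series by $\frac{\sqrt{3}}{8}\,s(s-1)(s-2)(s-3)\bigl(\tfrac{2y}{1+y^{2}}\bigr)^{2}y^{2}\sin t$ (Lemma~\ref{utol5}), whose proof rests on the ``cap lemma'' comparison of $\cos^{2\ell-1}(x)\sin(x)$ with a cosine cap (Lemma~\ref{caple}), and concludes with a sharpened Bernoulli estimate and the explicit polar equation of the lens. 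The paper also proves a uniqueness lemma (Lemma~\ref{uniq}) showing that the monotone-quantity (``mock log-Sobolev'') route, which settles $p\ge3$, provably cannot work for $p\in(2,3)$, so any substitute must abandon monotonicity --- a point your sketch does not engage with. As written, your proposal establishes neither the open diagonal range $p=q\in(3/2,2)\cup(2,3)$ nor the off-diagonal case.
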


In this paper we prove Weissler's conjecture for $p=q$. We intend to settle the general case in an upcoming manuscript. Our argument for the case $p<q$ requires checking the positivity of two polynomials with large integer coefficients, which is currently hard to present in a human verifiable way.

 %The theorem solves Weissler's conjecture~\cite{W1979}, an old-standing open problem in complex hypercontractivity theory on the boolean cube, see the next section for the details. The present paper deals with the case $p=q$. The general case when $q>p$ is settled in the upcoming manuscript~\cite{IN2}. 
 
 \newpage

 \subsection{Development of hypercontractivity in mid 70's: Boolean and Gaussian}

 In 1970, Bonami~\cite{Bon1} considered the case $z = r \in \mathbb{R}$. She showed that it is enough to verify  that $T_{r}$ is  contractive  on the Hamming cube of dimension $n=1$.  A little bit earlier, in 1966, Nelson~\cite{Nel1} (independently of Bonami) showed that the Gaussian analog $T_{r}^{G}$ of $T_{r}$  maps boundedly  $L ^{2}(d\gamma)$  to  $L^{q}(d\gamma)$, ($q> 2$, $r \in \mathbb{R}$, $d\gamma$ is the standard Gaussian measure on $\mathbb{R}^{k}$) with norm independent of $k$ provided that  $r$ is sufficiently close to zero. Later, Glimm~\cite{Glim} proved that for  $r$ sufficiently close to $0$, $T^{G}_{r}$ is in fact contractive as an operator  from   $L^{2}(d\gamma)$ to   $L^{q}(d\gamma)$. In 1970, Segal~\cite{Seg} obtained a result which implies that it suffices to check the boundedness of the operator $T^{G}_{r}$  in dimension $k=1$. Finally, in 1973, Nelson~\cite{Nel2}  showed that if $1\leq p \leq q <\infty$, then $T^{G}_{r} : L^{p}(d\gamma) \to L^{q}(d\gamma)$ is contractive if and only if $|r| \leq \sqrt{\frac{p-1}{q-1}}$; and if $|r|>\sqrt{\frac{p-1}{q-1}}$, then $T^{G}_{r}$ is not even bounded. 
 
We should mention that Nelson's result easily follows via the central limit theorem from Bonami's {\em real} hypercontractivity on the Hamming cube.

In 1975, Gross in his celebrated paper \cite{Gross} gave a simple proof of the real hypercontractivity on the Hamming cube by showing its equivalence to log-Sobolev inequalities. Inspired by works of Nelson and Gross\footnote{Apparently not knowing about Bonami.}, Beckner in~\cite{Bec75} obtained the Hausdorff--Young inequality with sharp constants by showing that it follows from the contractivity of $T_{i\sqrt{p-1}}$ from $L^{p}$ to $L^{p'}$, $p'=\frac{p}{p-1}$, on the Hamming cube when $p \in (1,2]$.  At that time, the proofs of the real hypercontractivity by Nelson,  and later by Gross, were real valued, and they could not be extended directly to complex $z$. The main technical part of Beckner's paper \cite{Bec75} is the proof of Bonami's two--point inequality when $z= i\sqrt{p-1}\in \mathbb{C}$, $q=p'$, and $p \in (1,2]$.  

It became an open problem under what conditions on the triples $(p,q,z)$ with $1\leq p\leq q <\infty$ and $z \in \mathbb{C}$ the operator $T^{G}_{z}$ is bounded from  $L^{p}(d\gamma)$ to  $L^{q}(d\gamma)$ with norm independent of  the dimension of the Euclidean space.  In 1979, Coifman, Cwikel, Rochberg, Sagher, and Weiss~\cite{Coif}  proved that $T^{G}_{z}$ is a contraction from $L^{p}(d\gamma)$ to $L^{p'}(d\gamma)$ if $z$ satisfies (\ref{inf13}). The same year,  Weissler~\cite{W1979} proved the full version of the conjecture except when $2<p \leq q<3$, and $3/2<p\leq q <2$.
 Weissler writes in his paper that he believes the theorem should be true for all $1\leq p \leq q<\infty,\,  z \in \mathbb{C}$ that satisfy (\ref{inf13}). The main open problem was to prove that the two-point inequality of Bonami--Beckner 
 \begin{align}\label{tpb}
 \left(\frac{|a+zb|^{q}+|a-zb|^{q}}{2}\right)^{1/q}\leq  \left(\frac{|a+b|^{p}+|a-b|^{p}}{2}\right)^{1/p}
 \end{align}
holds for all  $a,b \in \mathbb{C}$ if the triple $(p,q,z)$ satisfies condition (\ref{inf13}). In 1989, Epperson~\cite{Epperson} proved the Gaussian counterpart of the conjecture by showing that  condition (\ref{inf13}) implies $\|T^{G}_{z}\|_{L^{p}(d\gamma) \to L^{q}(d\gamma)}\leq 1$. His proof avoided the verification of difficult two-point inequalities (\ref{tpb}) and, thereby, did not imply the corresponding result on the Hamming cube.  
 
 In 1990, Lieb obtained a very general theorem~\cite{Lieb1}, which, in particular,  implied the result of Epperson.  After the work of Lieb, in 1997, Janson~\cite{Jan2} gave one more proof of the Gaussian complex hypercontractivity  via Ito calculus.  Janson's argument was later rewritten in terms of heat flows by Hu~\cite{Hu1}. 
Since 1979 no progress has been made on Weissler's conjecture.

\subsection{Applications}

In the case  $p=q>1,$ condition (\ref{inf13}) can be simplified to 
\begin{align*}
\left| z \pm i \frac{|p-2|}{2\sqrt{p-1}}\right| \leq \frac{p}{2\sqrt{p-1}},
\end{align*}
see~(\ref{lens}).
In other words, the admissible domain for $z$ is a lens domain,  i.e., an intersection of two disks (see~Fig.~\ref{fig:dom}). 

\begin{figure}[ht]
\centering
\includegraphics[scale=0.8]{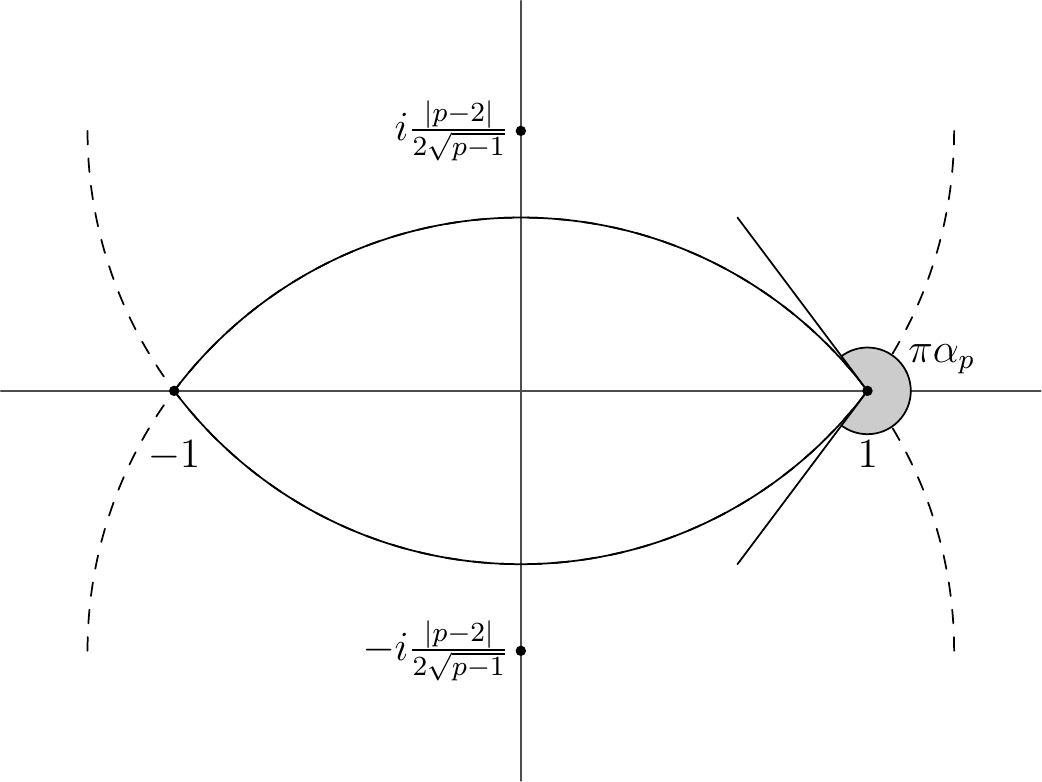}
\caption{Lens domain.}
\label{fig:dom}
\end{figure}

The bounding circles  of these disks pass through the points $z=\pm 1$, which belong to the boundary of the lens domain.  Let $\pi \alpha_{p}$ be the exterior angle  between  the two circles at the point $z=1$. We have 
\begin{align*}
\alpha_{p} = 1+\frac{2}{\pi}\arctan\left(\frac{|p-2|}{2\sqrt{p-1}}\right).
\end{align*}
Recently it was shown in~\cite{IE2} that the angle $\alpha_{p}$ plays an important role in Markov--Bernstein type estimates on the Hamming cube. We will list a series of corollaries that automatically follow from ~\cite{IE2, MN14} and the explicit knowledge of $\alpha_{p}$.  We remind that given 
$f : \{-1,1\}^{n} \to \mathbb{C}$, its Laplacian  $\Delta f $ is defined by $\Delta f= \sum_{j=1}^{n} D_{j} f$ where 
\begin{align*}
D_{j}f(x) = \frac{f(x_{1}, \ldots, x_{j}, \ldots, x_{n}) - f(x_{1}, \ldots, -x_{j}, \ldots, x_{n})}{2}, \quad x = (x_{1}, \ldots, x_{n})\in \{-1,1\}^{n}.
\end{align*}
The operator $\Delta$ is linear, and $\Delta w_{S}(x)=|S|w_{S}(x)$. Also define the discrete gradient 
\begin{align*}
|\nabla f|^{2} = \sum_{j=1}^{n} (D_{j}f)^{2}.
\end{align*}
\begin{corollary}\label{cari}
For each $p>1$, there exist finite $c_{1},c_{2}, c_{3}>0$ depending on $p$  such that for all  $f  = \sum_{S\subset \{1,\ldots, n\}, \; |S|\geq  d} a_{S}w_{S}$ and all $n\geq d$, we have 
\begin{align}\label{manor1}
\|e^{-t\Delta}f\|_{p} \leq c_{1} e^{-c_{2} d \min\{t,t^{\frac{1}{2-\alpha_{p}}}\}}\|f\|_{p} \quad \text{for all} \quad  t\geq 0,
\end{align}
and hence
\begin{align}\label{manor2}
\|\Delta  f\|_{p} \geq  c_{3} d^{2-\alpha_{p}} \|f\|_{p}.
\end{align}
\end{corollary}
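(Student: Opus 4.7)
The plan is to extract the bounds~(\ref{manor1}) and~(\ref{manor2}) from the main theorem of this paper via the general Markov--Bernstein machinery of~\cite{IE2, MN14}. Having Weissler's conjecture for $p=q$, we know that $T_z$ is $L^p$-contractive precisely on the lens domain $L_p$; a direct inspection of the defining inequality~(\ref{inf13}) shows that the interior opening angle of $L_p$ at each vertex $z=\pm1$ equals $\pi(2-\alpha_p)$. Fix $f=\sum_{|S|\ge d}a_S w_S$. Then $z\mapsto T_z f$ is a polynomial in $z$ with coefficients in $L^p$, hence entire as an $L^p$-valued function, vanishes to order at least $d$ at $z=0$, and by the main theorem satisfies $\|T_z f\|_p\le\|f\|_p$ on $L_p$. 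Letting $\Phi:L_p\to\{|w|<1\}$ be the conformal map with $\Phi(0)=0$ (and $\Phi(\pm1)=\pm1$ by the reflection symmetry $L_p=-L_p$), the composition $w\mapsto T_{\Phi^{-1}(w)}f$ is $L^p$-analytic on $\{|w|<1\}$, has a zero of order at least $d$ at $w=0$, and has boundary norm at most $\|f\|_p$. Dividing by $w^d$ and using that $\log\|\cdot\|_p$ of an $L^p$-analytic function is subharmonic, the maximum principle yields the vector-valued Schwarz bound
\[
\|T_z f\|_p\le |\Phi(z)|^d\,\|f\|_p,\qquad z\in L_p.
\]

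The next step is to estimate $|\Phi(e^{-t})|$ for $t\ge 0$. Near the vertex $z=1$, $L_p$ is a curvilinear wedge of opening angle $\pi(2-\alpha_p)$, which $\Phi$ straightens via $1-\Phi(z)\asymp(1-z)^{1/(2-\alpha_p)}$; specializing to $z=e^{-t}\approx 1-t$ gives $-\log|\Phi(e^{-t})|\gtrsim t^{1/(2-\alpha_p)}$ for $t\in(0,1]$. For $t\ge 1$, the point $e^{-t}$ lies in a fixed compact subset of $L_p$ bounded away from the vertices; since $\Phi$ is conformal at the interior point $z=0$ with $\Phi(0)=0$, the estimate $|\Phi(e^{-t})|\le C\,e^{-t}$ yields $-\log|\Phi(e^{-t})|\ge t-O(1)\gtrsim t$. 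Raising to the $d$th power and combining the two regimes absorbs the $O(1)$ term into the prefactor $c_1$ and produces~(\ref{manor1}).

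Finally, (\ref{manor2}) follows from (\ref{manor1}) by standard optimization. Choose $t_0\asymp d^{-(2-\alpha_p)}$ so that $c_1 e^{-c_2 d t_0^{1/(2-\alpha_p)}}\le \tfrac12$. The identity $f-e^{-t_0\Delta}f=\int_0^{t_0}e^{-s\Delta}\Delta f\,ds$ together with the $L^p$-contractivity of $e^{-s\Delta}$ for real $s\ge0$ gives $\|f-e^{-t_0\Delta}f\|_p\le t_0\|\Delta f\|_p$, so the triangle inequality
\[
\|f\|_p\le \|f-e^{-t_0\Delta}f\|_p+\|e^{-t_0\Delta}f\|_p\le t_0\|\Delta f\|_p+\tfrac12\|f\|_p
\]
yields $\|\Delta f\|_p\ge(2t_0)^{-1}\|f\|_p\gtrsim d^{2-\alpha_p}\|f\|_p$. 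The main obstacle is conceptual rather than technical: invoking the machinery of~\cite{IE2, MN14} requires $L^p$-contractivity of $T_z$ on the entire boundary $\partial L_p$, and in particular across the ranges $3/2<p<2$ and $2<p<3$, which is exactly what the main theorem of this paper supplies. The remaining ingredients---vector-valued Schwarz, conformal straightening at a corner, and the triangle inequality---are classical.
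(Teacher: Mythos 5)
Your proposal is exactly the argument the paper has in mind: the paper itself offers no details for Corollary~\ref{cari}, saying only that the proof of Eskenazis--Ivanisvili \cite{IE2} goes through verbatim once $L^p$-contractivity of $T_z$ on the lens is available for $p=q\in(3/2,3)$, and what you wrote is a faithful reconstruction of that argument (vector-valued Schwarz lemma after conformally mapping the lens to the disc, corner exponent $1/(2-\alpha_p)$ at $z=1$, then the standard identity $f-e^{-t_0\Delta}f=\int_0^{t_0}e^{-s\Delta}\Delta f\,ds$ to pass to \eqref{manor2}). Incidentally, since the lens is bounded by two circular arcs through $\pm1$, the map $\Phi$ is explicit (M\"obius map $z\mapsto\frac{1+z}{1-z}$ onto a sector of opening $\pi(2-\alpha_p)$, then the power $1/(2-\alpha_p)$, then a M\"obius map back to the disc), so the corner asymptotics need not be quoted as a general theorem. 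One step is wrong as literally written: for $t\ge 1$ you cannot ``absorb the $O(1)$ term into the prefactor $c_1$'', because raising $|\Phi(e^{-t})|\le Ce^{-t}$ to the $d$th power gives $C^{d}e^{-dt}$, and $c_1$ must be independent of $d$ (also $t-O(1)\gtrsim t$ fails near $t=1$ if the implied constant exceeds $1$). The fix is immediate: the tail space is invariant under $e^{-s\Delta}$ and $e^{-s\Delta}$ is an $L^p$-contraction for real $s\ge0$, so iterating the $t=1$ bound via the semigroup property gives $\|e^{-t\Delta}f\|_p\le \Phi(e^{-1})^{d\lfloor t\rfloor}\|f\|_p\le e^{-\lambda d t/2}\|f\|_p$ for $t\ge1$ with $\lambda=-\log\Phi(e^{-1})>0$; alternatively use that $\Phi$ is increasing on $[0,1)$ together with $\Phi(x)\le Cx$ only for $x$ small and shrink $c_2$. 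A similarly routine adjustment (taking $t_0$ of constant size, using the $\min\{t,t^{1/(2-\alpha_p)}\}$ form of \eqref{manor1}) covers bounded $d$ in the deduction of \eqref{manor2}; with these cosmetic repairs the proof is complete and matches the intended one.
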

In \cite{MN14} Mendel and Naor showed that if  $f$ takes values in a K-convex Banach space $X$, i.e.,  $a_{S} \in X$ for all $S \in \{1, \ldots, n\}$,  then inequality (\ref{manor1}) holds with some $\alpha_{p}(X) \in (0,1)$. They asked  if  $\alpha_{p}(X)$ in the inequalities  (\ref{manor1}) and (\ref{manor2}) can be replaced by $1$. The question is open even for $X=\mathbb{R}$  and is known as the ``heat smoothing conjecture'' (see \cite{HMO1}, where the case $d=1$ has been resolved, and \cite{IE2}, where the conjecture has been resolved for functions  with``narrow'' spectrum). In \cite{IE2} the first author with A.~Eskenazis showed that the conclusion of Corollary~\ref{cari} holds for $p\in (1,3/2)\cup (3, \infty)$ thanks to the theorem of Weissler~\cite{W1979}. Repeating the arguments in \cite{IE2} verbatim and using the main result of this paper for $p=q \in (3/2,3)$ we obtain Corollary~\ref{cari} for all $p \in (1, \infty)$. 

 We know that the bounds (\ref{manor1}) and (\ref{manor2}) are not sharp in general; for example when $p>2$ is such that $\alpha_{p}>3/2$, i.e., $p>4+2\sqrt{2}$, then better bounds are available due to Meyer.
\begin{theorem}[Meyer~\cite{meyer11}]
For each $p\geq 2$, there exist $C_{p}, c_{p}>0$  such that for any  $f  = \sum_{S\subset \{1,\ldots, n\}, \; |S|\geq  d} a_{S}w_{S}$ and all $n\geq d$, we have 
\begin{align*}
\|e^{-t\Delta}f\|_{p} \leq e^{-c_{p} d \min\{t,t^{2}\}}\|f\|_{p} \quad \text{for all} \quad  t\geq 0, 
\end{align*}
and hence 
\begin{align*}
\|\Delta  f\|_{p} \geq  C_{p} d^{1/2} \|f\|_{p}.
\end{align*}
\end{theorem}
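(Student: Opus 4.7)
My plan is to split the analysis into two regimes in $t$, obtain the semigroup decay estimate separately in each, and then extract the $\sqrt{d}$ Markov--Bernstein inequality as a corollary. The case $p=2$ is immediate from Parseval, so assume $p>2$ and set $t_0:=\tfrac12\log(p-1)>0$.

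For $t\ge 2t_0$ (the ``large-$t$'' regime), I would use Bonami--Gross hypercontractivity, which gives that $e^{-t_0\Delta}\colon L^2\to L^p$ is contractive. Writing $e^{-t\Delta}=e^{-t_0\Delta}\circ e^{-(t-t_0)\Delta}$, using the $L^2$ spectral gap $\|e^{-s\Delta}h\|_2\le e^{-sd}\|h\|_2$ (valid for $h$ with spectrum in $|S|\ge d$), and observing $\|f\|_2\le\|f\|_p$ for $p\ge 2$, one obtains
\[
\|e^{-t\Delta}f\|_p\;\le\;\|e^{-(t-t_0)\Delta}f\|_2\;\le\;e^{-(t-t_0)d}\|f\|_p,
\]
which is at most $e^{-dt/2}\|f\|_p$ when $t\ge 2t_0$.

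For $0\le t\le 2t_0$ (the ``small-$t$'' regime) one needs the Gaussian-in-$t$ bound $\|e^{-t\Delta}f\|_p\le e^{-c_p d t^2}\|f\|_p$. The natural approach is a differential inequality: with $g_t:=e^{-t\Delta}f$, the carr\'e du champ identity gives
\[
-\frac{d}{dt}\|g_t\|_p^p \;=\; p(p-1)\,\mathbb E\bigl[|g_t|^{p-2}|\nabla g_t|^2\bigr],
\]
so one needs the $p$-energy lower bound $\mathbb E[|g_t|^{p-2}|\nabla g_t|^2]\ge c_p\,d\,t\,\|g_t\|_p^p$; the crucial linear-in-$t$ factor will produce the $t^2$ exponent upon integration. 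This estimate would be reached by combining the $L^2$ Poincar\'e inequality $\mathbb E|\nabla g_t|^2\ge d\,\mathbb E|g_t|^2$ with the $L^p$ Riesz-transform equivalence $\|\Delta^{1/2}h\|_p\simeq\||\nabla h|\|_p$ on the cube due to Lust-Piquard. This step is the main obstacle: a bare $L^2$ spectral-gap argument compares only to $\|g_t\|_2^2$, and for $p>2$ the ratio $\|g_t\|_p/\|g_t\|_2$ is not controlled, so one must genuinely invoke the $L^p$ boundedness of the discrete Riesz transform and track the drift of $\|g_t\|_p$ away from $\|f\|_p$ on the short interval $[0,2t_0]$.

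The Markov--Bernstein consequence is then essentially free. Choose $t_\ast:=d^{-1/2}$, so $d t_\ast^2=1$ and $t_\ast\le 1$ (for $d\ge 1$). Then $\|g_{t_\ast}\|_p\le e^{-c_p}\|f\|_p$, while the fundamental theorem of calculus and the $L^p$-contractivity of $e^{-s\Delta}$ give
\[
\|f-g_{t_\ast}\|_p \;\le\; \int_0^{t_\ast}\|\Delta e^{-s\Delta}f\|_p\,ds \;\le\; t_\ast\,\|\Delta f\|_p.
\]
The reverse triangle inequality then yields $(1-e^{-c_p})\|f\|_p\le t_\ast\|\Delta f\|_p$, hence $\|\Delta f\|_p\ge(1-e^{-c_p})\sqrt{d}\,\|f\|_p$.
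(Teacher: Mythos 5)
The first thing to note is that the paper contains no proof of this statement: it is quoted from Meyer \cite{meyer11} purely for comparison with Corollary~\ref{cari}, so your proposal has to stand on its own. Within it, two pieces are correct and standard: the large-time regime (hypercontractivity $e^{-t_{0}\Delta}\colon L^{2}\to L^{p}$ with $t_{0}=\tfrac12\log(p-1)$, composed with the $L^{2}$ spectral gap on the tail space and $\|f\|_{2}\le\|f\|_{p}$), and the deduction of $\|\Delta f\|_{p}\ge C_{p}\sqrt{d}\,\|f\|_{p}$ from the semigroup bound via $t_{*}=d^{-1/2}$, the triangle inequality, and $L^{p}$-contractivity of $e^{-s\Delta}$.

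The genuine gap is exactly where you locate it, and it is not a routine one: the bound $\|e^{-t\Delta}f\|_{p}\le e^{-c_{p}dt^{2}}\|f\|_{p}$ for small $t$ \emph{is} the content of Meyer's theorem, and your sketch does not establish it. Two concrete problems. (a) The ``carr\'e du champ identity'' $-\tfrac{d}{dt}\|g_{t}\|_{p}^{p}=p(p-1)\,\mathbb{E}\bigl[|g_{t}|^{p-2}|\nabla g_{t}|^{2}\bigr]$ is a chain-rule (diffusion) identity; on the Hamming cube the discrete gradient obeys no chain rule, and what one actually has is $-\tfrac{d}{dt}\,\mathbb{E}|g_{t}|^{p}=p\,\mathbb{E}\bigl[|g_{t}|^{p-2}\,\Re(\overline{g_{t}}\,\Delta g_{t})\bigr]$, whose comparison with $\mathbb{E}\bigl[|g_{t}|^{p-2}|\nabla g_{t}|^{2}\bigr]$ requires a nontrivial two-point inequality with $p$-dependent constants; so even the framework of your differential inequality needs repair. (b) The key estimate $\mathbb{E}\bigl[|g_{t}|^{p-2}|\nabla g_{t}|^{2}\bigr]\ge c_{p}\,d\,t\,\|g_{t}\|_{p}^{p}$, which carries the entire $t^{2}$ gain, is only asserted. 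The $L^{2}$ Poincar\'e inequality on the tail space controls quantities relative to $\|g_{t}\|_{2}$, and as you concede the ratio $\|g_{t}\|_{p}/\|g_{t}\|_{2}$ is uncontrolled for $p>2$; the Lust-Piquard/Meyer Riesz-transform equivalence $\|\Delta^{1/2}h\|_{p}\simeq\|\,|\nabla h|\,\|_{p}$ compares two unweighted $L^{p}$ norms and by itself produces neither the weight $|g_{t}|^{p-2}$ nor a factor growing linearly in $t$; extracting your inequality from it is essentially equivalent to proving a lower bound of the form $\|\Delta^{1/2}h\|_{p}\gtrsim\sqrt{d}\,\|h\|_{p}$ on the tail space, i.e.\ to the theorem itself. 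As written, the small-time half of the first display --- and hence the theorem --- remains unproved; completing it requires the full Riesz-transform/duality machinery of Meyer (or the routes in \cite{MN14,IE2}), not the Poincar\'e-plus-equivalence combination sketched here.
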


The next corollary is due to Eskenazis--Ivanisvili \cite{IE2}, proved for $p\in (1,3/2)\cup (3, \infty)$ and now valid (by the main result of this paper) for all $p \in  (1, \infty)$. It naturally extends Freud's inequalities on the Hamming cube

\begin{corollary}\label{FFR}
For all $p>1,$ there exists finite $C_{p}>0$ such that 
\begin{align*}
&\|\Delta  f\|_{p} \leq 10 d^{\alpha_{p}} \|f\|_{p}, \\
&\|\nabla f\|_{p}\leq
\begin{cases}
C_{p} d^{\alpha_{p}/p}\ln(d+1) \|f\|_{p},  \quad p \in (1,2),\\
C_{p} d^{\alpha_{p}/2}\|f\|_{p},  \quad p \in [2, \infty],
\end{cases} 
\end{align*}
for all $f  = \sum_{S\subset \{1,\ldots, n\}, \; |S|\leq d} a_{S}w_{S}$ and all $n\geq d$. 
\end{corollary}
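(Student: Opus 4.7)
The plan is to deduce Corollary~\ref{FFR} from the main theorem of this paper essentially by transcribing the arguments of \cite{IE2}. Those arguments were carried out there only for $p\in(1,3/2)\cup(3,\infty)$ because the contractivity of $T_z$ on $L^p$ throughout the closed lens
\[
\Omega_p\ :=\ \Bigl\{z\in\C:\ \Bigl|z\pm i\tfrac{|p-2|}{2\sqrt{p-1}}\Bigr|\leq\tfrac{p}{2\sqrt{p-1}}\Bigr\}
\]
was the missing ingredient for $p\in[3/2,3]\setminus\{2\}$. Since the main theorem of this paper supplies this contractivity for all $p>1$, no further ideas are needed and the argument of \cite{IE2} goes through verbatim.

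To describe the mechanism, fix $f$ with Fourier--Walsh spectrum in $\{S:|S|\leq d\}$ and view
\[
F(z)\ :=\ T_z f\ =\ \sum_{|S|\leq d} z^{|S|} a_S w_S
\]
as an $L^p$-valued polynomial of degree at most $d$ in the complex variable $z$. The main theorem gives $\sup_{z\in\Omega_p}\|F(z)\|_p\leq\|f\|_p$. A direct computation using the formula for $\alpha_p$ shows that the interior angle of $\Omega_p$ at each of the two corner points $z=\pm1$ equals $\pi(2-\alpha_p)$. I would then invoke the classical Markov--Bernstein inequality for polynomials on a Jordan domain with a corner of opening $\beta$, namely $\|P'(z_0)\|\lesssim (\deg P)^{\,2-\beta/\pi}\sup_{\Omega}\|P\|$, applied to the vector-valued polynomial $F$ at $z_0=1$ with $\beta=\pi(2-\alpha_p)$. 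This yields $\|F'(1)\|_p\lesssim d^{\alpha_p}\|f\|_p$; since $F'(1)=\Delta f$, this is the first estimate of the corollary. The explicit constant $10$ is obtained by a careful choice of contour in the Cauchy representation of $F'(1)$ inside $\Omega_p$.

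For the gradient inequality I would follow the same circle of ideas. When $p\geq 2$, Khintchine's inequality applied to the Rademacher sum $\sum_{j=1}^{n}\varepsilon_j D_j f$ (equivalently, the $L^p$-boundedness of the discrete Riesz transforms) reduces the bound to an estimate on $\|\Delta^{1/2}f\|_p$, which is supplied by the same Markov--Bernstein mechanism on $\Omega_p$ but with derivative order $1/2$ in place of $1$, so that the corner angle $\pi(2-\alpha_p)$ now produces the exponent $\alpha_p/2$. For $p\in(1,2)$ the analogous scheme is implemented via Poisson subordination of $T_{e^{-t}}$ to the complex semigroup $T_z$ on $\partial\Omega_p$, yielding the exponent $\alpha_p/p$ at the cost of a logarithmic factor in $d$. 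The only genuine obstacle throughout is the availability of the contractivity input on $\Omega_p$ in the previously open range $p\in[3/2,3]\setminus\{2\}$, and this is precisely what the main theorem of this paper now provides.
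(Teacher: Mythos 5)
Your proposal is correct and follows the same route as the paper: the paper itself offers no independent argument for this corollary, stating only that it is due to Eskenazis--Ivanisvili \cite{IE2}, was proved there for $p\in(1,3/2)\cup(3,\infty)$, and ``now valid (by the main result of this paper) for all $p\in(1,\infty)$.'' Your supplementary sketch of the mechanism from \cite{IE2} --- reading off the interior angle $\pi(2-\alpha_p)$ at the corners $z=\pm1$ of $\Omega_p$, applying the vector-valued Markov--Bernstein inequality for analytic polynomials on a corner domain to $F(z)=T_zf$ with $F'(1)=\Delta f$, and the Khintchine/Poisson-subordination reduction for the gradient --- is a faithful account of the argument being cited, with the only new input being the contractivity of $T_z$ on $\Omega_p$ in the previously open range $p\in[3/2,3]\setminus\{2\}$.
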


\subsubsection{Complex and real hypercontractivity} To illustrate the advantage of complex hypercontractivity over the real one  let us outline a method described in \cite{IE2} for obtaining bounds on the norms of Fourier multipliers on spaces of functions on the Hamming cube with restricted spectrum  in a systematic way.

Let $1<p\leq q$, and suppose we are interested in obtaining a bound of the type 
\begin{align}\label{furfur}
\Biggl\| \sum_{S \subset \{0, \ldots, n\}, \,  |S|\leq d} \varphi(|S|) a_{S} w_{S} \Biggr\|_{q} \leq C_{p,q, d,  \varphi} \Biggl\| \sum_{S \subset \{0, \ldots, n\}, \,  |S|\leq d} a_{S} w_{S} \Biggr\|_{p}
\end{align}
where  $\varphi$ is some fixed function, which is usually called a Fourier multiplier.  The complex hypercontractivity yields the bound 
\begin{align}\label{tochki}
\Biggl\| \sum_{S \subset \{1, \ldots, n\}, \,  |S|\leq d} z^{|S|} a_{S} w_{S} \Biggr\|_{q} \leq \Biggl\| \sum_{S \subset \{1, \ldots, n\}, \,  |S|\leq d} a_{S} w_{S} \Biggr\|_{p},
\end{align}
which holds true for all $z \in \Omega_{p,q}$, where $\Omega_{p,q}$ is the domain described in (\ref{inf13}). To obtain the bound of the type (\ref{furfur}) it suffices to find a complex valued measure $\mu$ on $\Omega_{p,q}$ such that 
\begin{align*}
\int_{\Omega_{p,q}} z^{j} d\mu(z) = \varphi(j) \quad \text{for all} \quad j=0,\ldots, d. 
\end{align*} 
Then the triangle inequality together with (\ref{tochki}) gives 

\begin{align*}
&\Biggl\| \sum_{S \subset \{1, \ldots, n\}, \,  |S|\leq d} \varphi(|S|) a_{S} w_{S} \Biggr\|_{q} \leq   \int_{\Omega_{p,q}} \Biggl\| \sum_{S \subset \{1, \ldots, n\}, \,  |S|\leq d} z^{|S|} a_{S} w_{S} \Biggr\|_{q} d|\mu|(z) \stackrel{(\ref{tochki})}{\leq}\\
& \| \mu\|  \Biggl \| \sum_{S \subset \{1, \ldots, n\}, \,  |S|\leq d} a_{S} w_{S} \Biggr\|_{p},
\end{align*}
where $\| \mu\|$ stands for the total variation norm of $\mu$.

To minimize the norm of $\mu$, we may invoke the Hahn--Banach theorem together with the Riesz representation theorem. Let  $\mathcal{P}^{d} \subset C(\Omega_{p,q})$ be a subspace  consisting of all analytic polynomials of degree $d$. Let $L$ be a linear functional on 
$\mathcal{P}^{d}$ such that  $L(z^{j})=\varphi(j)$ for all $0\leq j \leq d$.  Clearly its norm $\| L\|$  is the smallest positive constant $C_{p,q,d,\varphi}$ for which  
\begin{align}\label{expr1}
\Biggl| \sum_{j=0}^{d} \varphi(j)a_{j} \Biggr| \leq C_{p,q,d,\varphi}\,    \Biggl\| \sum_{j=0}^{d}  a_{j}z^{j}\Biggr\|_{C(\Omega_{p,q})} \quad \text{for all} \quad a_{0}, ..., a_{d} \in \mathbb{C}. 
\end{align}
By the Hahn--Banach theorem, there exists $\widetilde{L} \in C^{*}(\Omega_{p,q})$ such that $\widetilde{L}|_{\mathcal{P}^{d}} = L$, and $\| \widetilde{L}\|_{C^{*}(\Omega_{p,q})} = C_{p,q,d,\varphi}$. By the Riesz representation theorem $\widetilde{L}(h)  = \int_{\Omega_{p,q}} h(z) d\mu(z)$ for some complex valued Radon measure $d\mu$ on $\Omega_{p,q}$ such that $\int_{\Omega_{p,q}}d |\mu(z)| = \| \widetilde{L}\|_{C^{*}(\Omega_{p,q})} = C_{p,q,d,\varphi}$. Thus we obtain (\ref{furfur})  with the constant $C_{p,q,d,\varphi}$ that solves the extremal problem (\ref{expr1}). 

If we use only the real hypercontractivity in this argument, then the space $C(\Omega_{p,q})$  on the right hand side of (\ref{expr1}) will be replaced by $C(\Omega_{p,q}^{\mathbb{R}})$ where 
$$
\Omega_{p,q}^{\mathbb{R}} = \left[-\sqrt{\frac{p-1}{q-1}}, \sqrt{\frac{p-1}{q-1}}\right] \subset \mathbb{R},
$$ 
in which case one gets (\ref{expr1}) with a constant $C_{p,q,d,\varphi}^{\mathbb{R}} \geq C_{p,q,d,\varphi}$ due to the fact that $\Omega_{p,q}^{\mathbb{R}} \subset \Omega_{p,q}$. 

Thus we see that ``the amount of improvement'' the complex hypercontractivity gives over the real one is determined by how small the norm  of the functional $L$ on $\mathcal{P}^{d}$ equipped with the $C(\Omega_{p,q})$ norm can be compared to the case when the norm on  $\mathcal{P}^{d}$  is replaced by the $C(\Omega^{\mathbb{R}}_{p,q})$ one.

Some more applications of complex hypercontractivity are given in \cite{IE2}.

\section{The proof}

We start with several observations (Sections~\ref{oerti} -- \ref{oxuti}) that are well known to experts.   We decided to include them here for the reader's convenience.  

\subsection{Tensor power trick  and induction on dimension}\label{oerti}
In this section we show the equivalence of  several inequalities. 
\begin{lemma}\label{equi1}
Let $1\leq p \leq q<\infty$ and $z \in \mathbb{C}$ be fixed. The following  are equivalent:
\begin{itemize}
\item[(i)] $\|T_{z} f\|_{q} \leq C(p,q,z)\|f\|_{p}$ for some $C(p,q,z)<\infty$,  all $f:\{-1,1\}^{n} \to \mathbb{C}$, and all $n\geq 1$. 
\item[(ii)] $\|T_{z} f\|_{q} \leq \|f\|_{p}$ for all $f:\{-1,1\}^{n} \to \mathbb{C}$ and all $n\geq 1$.
\item[(iii)] $\|T_{z} f\|_{q} \leq \|f\|_{p}$ for all $f:\{-1,1\} \to \mathbb{C}$.
\end{itemize} 
\end{lemma}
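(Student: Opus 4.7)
The plan is to prove the chain of implications (iii) $\Rightarrow$ (ii) $\Rightarrow$ (i) $\Rightarrow$ (ii) (with (ii) $\Rightarrow$ (iii) being the trivial restriction to $n=1$). The implication (ii) $\Rightarrow$ (i) is immediate with $C(p,q,z)=1$, so the real content lies in (iii) $\Rightarrow$ (ii) (induction on dimension) and (i) $\Rightarrow$ (ii) (tensor power trick). Both exploit the fact that $T_z$ on $\{-1,1\}^n$ factors as the tensor product of the one-dimensional operators along each coordinate.

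For (iii) $\Rightarrow$ (ii), I would induct on $n$. Write $T_z = T_z^{(1)}\otimes T_z^{(n-1)}$, where $T_z^{(1)}$ acts in the first coordinate $x_1$ and $T_z^{(n-1)}$ in the remaining coordinates $x'=(x_2,\dots,x_n)$. For fixed $x'$, the base case (iii) applied to the one-variable function $x_1\mapsto T_z^{(n-1)} f(x_1,x')$ gives
\begin{align*}
\|T_z f(\cdot,x')\|_{L^q_{x_1}}\leq \|T_z^{(n-1)} f(\cdot,x')\|_{L^p_{x_1}}.
\end{align*}
Taking $L^q_{x'}$ norms on both sides and applying Minkowski's integral inequality (legitimate exactly because $p\leq q$) yields
\begin{align*}
\|T_z f\|_{L^q(\{-1,1\}^n)}\leq \bigl\|\|T_z^{(n-1)} f\|_{L^q_{x'}}\bigr\|_{L^p_{x_1}}.
\end{align*}
Now the inductive hypothesis applied in dimension $n-1$ to $f(x_1,\cdot)$ for each $x_1$ gives $\|T_z^{(n-1)} f(x_1,\cdot)\|_{L^q_{x'}}\leq \|f(x_1,\cdot)\|_{L^p_{x'}}$, and taking $L^p_{x_1}$ norms finishes the induction.

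For (i) $\Rightarrow$ (ii), I would use the tensor power trick. Given $f:\{-1,1\}^n\to\mathbb{C}$, form $F=f^{\otimes k}:\{-1,1\}^{nk}\to\mathbb{C}$ by $F(y^{(1)},\dots,y^{(k)})=\prod_{i=1}^k f(y^{(i)})$. Since $T_z$ on the bigger cube is the tensor product of the $T_z$'s on each block, $T_z F=(T_z f)^{\otimes k}$, and the product structure of the uniform measure gives $\|F\|_p=\|f\|_p^k$ and $\|T_z F\|_q=\|T_z f\|_q^k$. Hypothesis (i) applied in dimension $nk$ then reads
\begin{align*}
\|T_z f\|_q^k = \|T_z F\|_q \leq C(p,q,z)\|F\|_p = C(p,q,z)\|f\|_p^k;
\end{align*}
taking $k$-th roots and sending $k\to\infty$ kills the constant and delivers $\|T_z f\|_q\leq\|f\|_p$.

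I do not expect serious obstacles here: the arithmetic of the tensor power trick is standard, and the only place where the hypothesis $p\leq q$ is genuinely used is in invoking Minkowski's integral inequality during the inductive step, which is the one point where some care is warranted. The rest is bookkeeping about how $T_z$ respects the product structure of the Hamming cube.
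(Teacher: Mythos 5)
Your proposal is correct and follows essentially the same route as the paper: the tensor power trick $F=f^{\otimes k}$ with $k\to\infty$ for (i)$\Rightarrow$(ii), and for (iii)$\Rightarrow$(ii) an induction on dimension that applies the one-dimensional bound in the $x_1$ coordinate, swaps the mixed norms via Minkowski's inequality (using $p\leq q$), and then invokes the inductive hypothesis in the remaining variables — the paper writes this out concretely as $f(zx_1,\ldots,zx_n)=A(zx')+zx_1B(zx')$ rather than in tensor-product language, but the steps are identical. No gaps.
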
 
Clearly it follows from the lemma  that the conjecture will be  proved once the equivalence of the two-point inequality (iii) and the condition (\ref{inf13}) is verified.  
\begin{proof}
Obviously (ii) implies (i). To show that (i) implies (ii), consider $F(x) = f(x^{1})\cdots f(x^{k})$ where $x=(x^{1}, \ldots, x^{k})\in \{-1,1\}^{nk}$. Then $\|F\|_{p} = \|f\|_{p}^{k}$, $\|T_{z}F\|_{q} = \|T_{z} f\|_{q}^{k}$. Therefore, (i) implies $\|T_{z}F\|_{q} \leq C(p,q,z)\|F\|_{p}$, which in turn implies $\|T_{z}f\|_{q}\leq (C(p,q,z))^{1/k}\|f\|_{p}$. Letting $k \to \infty$ we obtain (ii). 

Obviously (ii) implies (iii). Next, we  show that (iii) implies (ii).  Let 
\begin{align}\label{multiv}
f(x_{1}, \ldots, x_{n}) = \sum_{S \subset \{1,\ldots, n\}}a_{S} \prod_{j \in S} x_{j}.
\end{align}
 Let us extend the domain of definition of $f$ to be all $\mathbb{R}^{n}$ by considering the right hand side of (\ref{multiv})  as a multivariate polynomial of variables $x_{1}, \ldots, x_{n}$.  Then we can write
\begin{align*}
T_{z}f(x_{1}, \ldots, x_{n}) = f(zx_{1}, \ldots, zx_{n}) \quad \text{for all} \quad (x_{1}, \ldots, x_{n}) \in \{-1,1\}^{n}.
\end{align*}
By (iii), for any complex numbers $A,B \in \mathbb{C}$, we have 
\begin{align}\label{twov1}
\mathbb{E}_{x_{1}} |A+zx_{1} B|^{q} \leq \left(\mathbb{E}_{x_{1}} |A+x_{1} B|^{p}\right)^{q/p}
\end{align}
where $\mathbb{E}_{x_{1}}$ means that we take the expectation with respect to the symmetric $\pm1$ Bernoulli random variable $x_{1}$. Since 
\begin{align*}
f(zx_{1}, zx_{2}, \ldots, zx_{n}) = A(zx_{2}, \ldots, zx_{n}) + zx_{1} B(zx_{2}, \ldots, zx_{n}),
\end{align*} 
we can write 
\begin{align*}
&\|T_{z}f\|^{p}_{q} =\left(\mathbb{E}_{x_{n}}\ldots \mathbb{E}_{x_{1}} |A(zx_{2}, \ldots, zx_{n}) + zx_{1} B(zx_{2}, \ldots, zx_{n})|^{q}  \right)^{p/q}\stackrel{(\ref{twov1})}{\leq} \\
&\left(\mathbb{E}_{x_{n}}\ldots \mathbb{E}_{x_{2}} \left( \mathbb{E}_{x_{1}} |A(zx_{2}, \ldots, zx_{n}) + x_{1} B(zx_{2}, \ldots, zx_{n})|^{p}\right)^{q/p}  \right)^{p/q}\stackrel{\mathrm{Minkowski}}{\leq} \\
&\mathbb{E}_{x_{1}} \left(\mathbb{E}_{x_{n}}\ldots \mathbb{E}_{x_{2}} |A(zx_{2}, \ldots, zx_{n}) + x_{1} B(zx_{2}, \ldots, zx_{n})|^{q}  \right)^{p/q} \stackrel{\mathrm{induction}}{\leq}  \mathbb{E} |f|^{p}.
\end{align*}
Notice that in the second inequality we used the condition $1\leq q/p$. 
\end{proof}

In the next section we explain where the condition (\ref{inf13}) comes from.

\subsection{From global to local: the necessity part}\label{oori}

The condition (iii) of Lemma~\ref{equi1} is equivalent to the following two-point inequality 
\begin{align}\label{two-pointa}
\left( \frac{|1+wz|^{q} + |1-wz|^{q}}{2}\right)^{1/q} \leq\left( \frac{|1+w|^{p}+|1-w|^{p}}{2}\right)^{1/p},
\end{align}
which should hold true for all $w \in \mathbb{C}$. Let us explain that condition (\ref{inf13}) is in fact an ``infinitesimal'' form of the inequality (\ref{two-pointa}) when $w \to 0$. Indeed, let $w = \varepsilon v$ where $v \in \mathbb{C}$, $|v|=1$, is fixed and $\varepsilon >0$. As $\varepsilon \to 0$, we have 
\begin{align*}
|1+w|^{p} &= (1+2\varepsilon  \Re v + \varepsilon^{2} |v|^{2})^{p/2}  \\
&=1 + \frac{p}{2}\left( 2\varepsilon \Re v + \varepsilon^{2}|v|^{2}\right) +\frac{p}{4}\left(\frac{p}{2}-1\right)4 \varepsilon^{2} (\Re v)^{2} + O(\varepsilon^{3})\\
&=1+\varepsilon p \Re v + \varepsilon^{2} \frac{p}{2}\left(|v|^{2} + (p-2)(\Re v)^{2} \right) + O(\varepsilon^{3}).
\end{align*}

Therefore, comparing the second order terms, we see that the two-point inequality (\ref{two-pointa}), in particular, implies that 
\begin{align}\label{pequ}
|vz|^{2} + (q-2) (\Re vz)^{2} \leq |v|^{2} + (p-2)(\Re v)^{2} \quad \text{for all unit vectors} \quad v \in \mathbb{C}.
\end{align}
The last inequality can be rewritten as 
\begin{align*}
(p-2)\left(\frac{v+\bar{v}}{2}\right)^{2} - (q-2)\left(\frac{vz+\overline{vz}}{2}\right)^{2}\geq |z|^{2} -1.
\end{align*}
Multiplying by $2$ and opening the parentheses, we obtain 
\begin{align*}
(p-2)-(q-2)|z|^{2} + \Re [((p-2)-(q-2)z^{2})v^{2}] \geq 2|z|^{2}-2,
\end{align*}
i.e., 
\begin{align*}
-\Re [((p-2)-(q-2)z^{2})v^{2}]  \leq p-q|z|^{2},
\end{align*}
which, since $v$ is an arbitrary unit vector, is equivalent to (\ref{inf13}).

\subsection{The $\mathrm{inf}$ representation}\label{osami}
It will be helpful to describe the domain $\Omega_{p,q}$ of all $z$'s satisfying (\ref{inf13}) in polar coordinates. Notice that if $q=1$, then $p=1$ and, therefore, (\ref{inf13}) implies that $z \in [-1,1]$. In this case (\ref{tpb}) trivially holds by convexity. In what follows we assume that $q>1$.

Let $z=re^{it} \in \Omega_{p,q}$ and $v=e^{i \beta}$. Then (\ref{pequ}) takes the form 
\begin{align*}
r^{2} \left( 1 + (q-2)\cos^{2}(t+\beta)\right) \leq 1+(p-2)\cos^{2}(\beta).
\end{align*}
  Dividing both sides of  this inequality  by $1 + (q-2)\cos^{2}(t+\beta)$ and taking the infimum over all $\beta \in \mathbb{R}$, we obtain 
\begin{align*}
r \leq \inf_{\beta\in \mathbb{R}} \sqrt{\frac{1+(p-2)\cos^{2}(\beta)}{1+(q-2)\cos^{2}(t+\beta)}}.
\end{align*}
Given $t \in \mathbb{R}$, let $z=r(t) e^{it}$, $r(t)\geq 0$ be such that  $z$ lies on the boundary of $\Omega_{p,q}$. Then 
\begin{align}\label{temp08}
r(t) = \inf_{\beta\in \mathbb{R}}  \sqrt{\frac{1+(p-2)\cos^{2}(\beta)}{1+(q-2)\cos^{2}(t+\beta)}}.
\end{align}
It follows from (\ref{temp08}) that $r(t)$ is an even $\pi$-periodic function.

%\pagebreak

Throughout  the rest of the paper we assume that $p=q>1$.  

\subsection{Lens domain}\label{ootxi}

For $p=q$ the two-point inequality (\ref{two-pointa}) takes the form 
 \begin{align}\label{two-point}
|1+wz|^{p} + |1-wz|^{p} \leq |1+w|^{p}+|1-w|^{p} \quad \text{for all} \quad w \in \mathbb{C},
\end{align}
and (\ref{inf13}) takes the form 
\begin{align}\label{nozhnice1}
1-|z|^{2} \geq \frac{|p-2|}{p}|1-z^{2}|.
\end{align}
 Squaring  and subtracting  $(1-|z|^{2})^{2} \frac{(p-2)^{2}}{p^{2}}$ from both sides of the inequality (\ref{nozhnice1}), we obtain 
\begin{align*}
(1-|z|^{2})^{2} \frac{4(p-1)}{p^{2}} \geq \frac{(p-2)^{2}}{p^{2}} \left(|1-z^{2}|^{2} - (1-|z|^{2})^{2} \right) = \frac{(p-2)^{2}}{p^{2}} \left(\frac{z-\bar{z}}{i} \right)^{2}.
\end{align*}
From (\ref{nozhnice1}) we see that $|z|\leq 1$, so taking the square root in the last inequality we obtain 
\begin{align*}
1-|z|^{2} \geq \frac{|p-2|}{\sqrt{p-1}}|\Im z|.
\end{align*}
The last condition can be rewritten as 
\begin{align}\label{lens}
\left| z \pm i \frac{|p-2|}{2\sqrt{p-1}} \right|\leq \frac{p}{2\sqrt{p-1}}.
\end{align}
The set (\ref{lens}) represents a ``lens'' domain, i.e.,  the intersection of two discs centered at points $\pm \frac{i|p-2|}{2\sqrt{p-1}}$ of radii $\frac{p}{2\sqrt{p-1}}$ whose boundary circles pass through the points $1$ and $-1$.

Recall that the boundary of $\Omega_{p,p}$ is described in polar coordinates by the equation $z = r(t)e^{it}$. It is easy to see from (\ref{lens}) that  $r(t)$ is a decreasing function on $[0, \pi/2]$, 
\begin{align}\label{defr}
r(0)=1,  \quad  r(\pi/2)=\min\left\{\sqrt{p-1}, \frac{1}{\sqrt{p-1}}\right\}.
\end{align}
In the next section we explain that it is enough to prove the two-point inequality (\ref{two-point}) only for $p\geq 2$ and a certain family of points $z$ and $w$. 
\subsection{Duality, symmetry, and convexity}\label{oxuti}
Since $(T_{z})^{*} = T_{\bar{z}}$, and $\| T_{z} \|_{L^{p} \to L^{p}} = \| (T_{z})^{*} \|_{L^{p'} \to L^{p'}}$  (as usual $p'=\frac{p}{p-1}$), we may assume without loss of generality that $p\geq 2$. The reader may also verify that the condition (\ref{lens}) is invariant under the replacement of $p$ by $p'$.

Next, we claim that it suffices to check (\ref{two-point}) for $z \in \partial \Omega_{p,p}$.  Indeed,  every interior point $z \in \Omega_{p,p}$ can be written as a convex combination of two boundary points  $z_{1}, z_{2}$ of $\Omega_{p,p}$. Since the function 
\begin{align*}
z \mapsto \frac{|1+wz|^{p}+|1-wz|^{p}}{2}
\end{align*} 
is convex on $\mathbb{C}$,  its value at $z$ does not exceed the maximum of its values at $z_{1}$ and $z_{2}$.

In what follows, we set $z=r(t)e^{it}$, $t\in \mathbb{R}$. Let us rewrite (\ref{two-point}) as 
\begin{align}\label{two-point1}
\left|1+\frac{w}{z} \right|^{p} +\left|1-\frac{w}{z} \right|^{p}  \geq |1+w|^{p}+|1-w|^{p}.
\end{align}
Let 
\begin{align}\label{maxc}
c(t) = \frac{1}{r(t)} \in \left[1, \sqrt{p-1}\right] \quad (\text{recall that}\; \; p\geq 2)
\end{align}
 and let $w=y e^{ia}$ with $y\geq 0, a\in \mathbb{R}$. Notice that $w/z = c(t) y e^{i(-t+a)} = c(-t) y e^{i(-t+a)}$. Changing the variable $-t$ back to $t$, we can rewrite the  inequality (\ref{two-point1})  as follows
\begin{align}\label{in1}
|1+c(t)ye^{i(t+a)}|^{p} + |1-c(t)ye^{i(t+a)}|^{p}\geq |1+ye^{ia}|^{p}+|1-ye^{ia}|^{p}.
\end{align}
\pagebreak
\begin{lemma}\label{dadis}
It is enough to check  (\ref{in1}) for $0\leq a \leq a+t \leq \frac{\pi}{2}$. 
\end{lemma}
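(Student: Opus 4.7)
The first observation I would make is that both sides of (\ref{in1}) depend on the pair $(a,t)$ only through $|\cos a|$, $|\cos(a+t)|$, and $c(t)$. Indeed, setting
\begin{align*}
\Phi(c, x) := (1 + c^{2} y^{2} + 2cyx)^{p/2} + (1 + c^{2} y^{2} - 2cyx)^{p/2},
\end{align*}
the inequality (\ref{in1}) reads $\Phi(c(t), \cos(t+a)) \geq \Phi(1, \cos a)$, and $\Phi$ is even in $x$.

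A short calculation shows that for $p\geq 2$ and $y\geq 0$ the function $\Phi(c,x)$ is non-decreasing in $|x|$ and non-decreasing in $c\geq 0$: both follow from the fact that $u\mapsto u^{p/2-1}$ is non-decreasing on $[0,\infty)$ when $p\geq 2$. Together with $c(t)\geq 1$ (see (\ref{maxc})), this immediately handles the case $|\cos(a+t)|\geq |\cos a|$, since $\Phi(c(t), |\cos(a+t)|) \geq \Phi(1, |\cos(a+t)|) \geq \Phi(1, |\cos a|)$. So from now on I may assume $|\cos a| > |\cos(a+t)|$.

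Set $\alpha := \arccos|\cos a|$ and $\beta := \arccos|\cos(a+t)|$, so that $0\leq \alpha < \beta \leq \pi/2$, and define the candidate pair $(a',t'):=(\alpha, \beta-\alpha)$. This pair already satisfies $0\leq a' \leq a'+t' \leq \pi/2$ and has matching cosines $|\cos a'| = |\cos a|$ and $|\cos(a'+t')| = |\cos(a+t)|$. To finish I compare $c(t)$ with $c(t')$: since $a\equiv \pm\alpha \pmod\pi$ and $a+t \equiv \pm\beta \pmod\pi$, the quantity $\cos^{2}t$ equals either $\cos^{2}(\beta-\alpha)$ or $\cos^{2}(\alpha+\beta)$, so reducing $|t|$ modulo $\pi$ into $[0,\pi/2]$ gives either $\beta-\alpha$ or $\min(\alpha+\beta,\, \pi-\alpha-\beta)$. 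Both of these are $\geq \beta-\alpha$ (the second because $\beta\leq \pi/2$), and $c$ is increasing on $[0,\pi/2]$ by (\ref{defr}), so $c(t)\geq c(\beta-\alpha) = c(t')$. Monotonicity of $\Phi$ in $c$ now gives $\Phi(c(t), |\cos(a+t)|) \geq \Phi(c(t'), |\cos(a'+t')|)$, so (\ref{in1}) for $(a',t')$ implies (\ref{in1}) for $(a,t)$.

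The only real bookkeeping is in verifying $c(t)\geq c(\beta-\alpha)$ for all admissible original $(a,t)$; the monotonicities of $\Phi$ in $c$ and $|x|$ are routine one-variable calculus, and the rest amounts to a direct use of the evenness and $\pi$-periodicity of both sides together with the even and $\pi$-periodic character of $c$. I do not foresee any serious obstacle.
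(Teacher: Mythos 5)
Your proof is correct and follows essentially the same route as the paper: the two monotonicity facts (in $c$, using $c(t)\ge 1$, and in $|\cos(a+t)|$, using $p\ge 2$) dispose of the case $|\cos(a+t)|\ge|\cos a|$, and the remaining case is reduced to the normalized range $0\le a\le a+t\le\pi/2$ via the evenness, $\pi$-periodicity, and monotonicity on $[0,\pi/2]$ of $c$. The only difference is presentational: where the paper runs an explicit case analysis with the substitutions $t^{*}=\pi-t-2a$, $t^{*}=t-\pi$, $a^{*}=\pi-a$, $t^{*}=2\pi-t$, you make the single observation that $t\equiv\pm(\beta-\alpha)$ or $\pm(\beta+\alpha)\pmod{\pi}$, hence $\operatorname{dist}(t,\pi\mathbb{Z})\ge\beta-\alpha$ and $c(t)\ge c(\beta-\alpha)$ — a tidier bookkeeping of the same idea.
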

\begin{proof}
Denote for brevity $c=c(t)$, and rewrite (\ref{in1}) as 
\begin{align}\label{in3}
\left(c^{2}y^{2}+1+2cy\cos(a+t)\right)^{p/2}&+\left(c^{2}y^{2}+1 - 2cy \cos(a+t)\right)^{p/2}\geq \\
\left(y^{2}+1+2y\cos(a)\right)^{p/2}&+\left(y^{2}+1-2y\cos(a)\right)^{p/2}. \nonumber
\end{align}
The map $s \mapsto  |1+sye^{i(t+a)}|^{p}$ is convex. Therefore the map 
$$
s \mapsto |1+sye^{i(t+a)}|^{p}+|1-sye^{i(t+a)}|^{p} = \left(s^{2}y^{2}+1+2sy\cos(a+t)\right)^{p/2}+\left(s^{2}y^{2}+1 - 2sy \cos(a+t)\right)^{p/2}
$$
 is increasing for $s\geq 0$. Since $c\geq 1$, we have  
\begin{align*}
\left(c^{2}y^{2}+1+2cy\cos(a+t)\right)^{p/2}&+\left(c^{2}y^{2}+1 - 2cy \cos(a+t)\right)^{p/2}\geq \\
\left(y^{2}+1+2y\cos(a+t)\right)^{p/2}&+\left(y^{2}+1 - 2y \cos(a+t)\right)^{p/2}. 
\end{align*}
Also notice that since $p\geq 2$, the map $s \mapsto (A+Bs)^{p/2}$ is convex and, thereby, the map  $s \mapsto (A+Bs)^{p/2}+(A-Bs)^{p/2}$ is increasing for $s\geq 0$ as long as $A\pm Bs \geq 0$.  Thus,  if $|\cos(a+t)|\geq |\cos(a)|$, then
\begin{align*}
\left(y^{2}+1+2y\cos(a+t)\right)^{p/2}&+\left(y^{2}+1 - 2y \cos(a+t)\right)^{p/2}\geq \\
\left(y^{2}+1+2y\cos(a)\right)^{p/2}&+\left(y^{2}+1 - 2y \cos(a)\right)^{p/2},
\end{align*}
i.e., inequality (\ref{in1}) trivially holds whenever $|\cos(a+t)|\geq |\cos(a)|$.

  By the $2\pi$-periodicity of $\cos(x)$ and $c(t)$ we can assume that $a, t \in [0, 2\pi)$. 
  
$\textup{(i)}$  Suppose  $a \in [0, \pi/2]$.  The assumption $|\cos(a+t)|< |\cos(a)|$ implies that $a+t \in [a, \pi-a] \cup [\pi+a, 2\pi-a]$. Consider first the case when $a+t \in [a, \pi-a]$.   If $a+t \in [\pi/2, \pi-a]$, then  $t^{*} = \pi -t-2a \geq 0$. Clearly $t^{*}$ satisfies $a+t^{*} \leq \pi/2$,   $|\cos(a+t)| = |\cos(a+t^{*})|$.  Since  $t, t+2a \in [0, \pi]$ and $t+a \geq \pi/2$, the inequality $|\pi/2 -t|\leq |\pi/2 -(2a+t)|\leq \pi/2$ holds and, thereby,  $c(t^{*}) = c(t+2a) \leq c(t)$ (we remind that $c(s)=1/r(s)$, $s \mapsto r(s)$ is decreasing on $[0,\pi/2]$ and $r(s)=r(\pi-s)$ for $s \in [0,\pi/2]$, so the closer $s \in [0, \pi]$ is to $\pi/2$, the larger $c(s)$ is). 
  Thus (\ref{in3}) becomes stronger if we replace $t$ by $t^{*}$.

If $a+t\in [a+\pi, 2\pi-a]$, then we consider $t^{*} = t-\pi$ and use the fact that $c(t^{*}) = c(t)$ and $|\cos(a+t)| = |\cos(a+t^{*})|$. On the other hand, $a+t^{*} \in [a, \pi-a]$, which reduces this subcase to the previously considered one. 

$\textup{(ii)}$ Suppose $a \in (\pi/2, \pi)$. Consider $a^{*} = \pi-a \in (0, \pi/2)$ and $t^{*} = 2\pi -t$. 
Then $|\cos(a^{*})|=|\cos(a)|$,  $|\cos(a^{*}+t^{*})| =|\cos(a+t)|$,  $c(t) = c(t^{*})$, and the inequality reduces to case $\textup{(i)}$. 

$\textup{(iii)}$ Suppose $a \in [\pi, 2\pi)$. Then replace $a$ by  $a^{*} = a - \pi$ and  reduce the inequality in question to the previous two cases.  The lemma is proved. 
\end{proof}

\subsection{Proof of (\ref{in3}) when $p\geq 3$ via ``mock log-Sobolev inequality''}\label{weiapp}
Let us give a proof of (\ref{in3}) for $p\geq 3$. This case was proved by Weissler~\cite{W1979}. His argument  is similar to the proof of the equivalence of the log-Sobolev inequality and the real  hypercontractivity. Indeed, let us briefly mention the connection. The real hypercontractivity is equivalent (see \cite{Bon1}) to the following two-point inequality
\begin{align}\label{realh}
\left(\frac{ \left| a+\sqrt{\frac{p-1}{q-1}}\, b\right|^{q} + \left| a - \sqrt{\frac{p-1}{q-1}}\, b\right|^{q}}{2}\right)^{1/q} \leq \left( \frac{\left| a+ b\right|^{p} + \left| a - b\right|^{p}}{2}\right)^{1/p}
\end{align}   
for all $1<p\leq q <\infty$ and all $a,b \in \mathbb{R}$. The factor $\sqrt{\frac{p-1}{q-1}}$ is a ratio of the values of the same function $s \mapsto \sqrt{s-1}$ at $s=p$ and $s=q$.  Therefore   (\ref{realh}) is equivalent to the statement that the mapping 
\begin{align}\label{logs}
p \mapsto \left( \frac{\left| a+ \frac{x}{\sqrt{p-1}}\right|^{p} + \left| a - \frac{x}{\sqrt{p-1}}\right|^{p}}{2}\right)^{1/p}
\end{align}
is decreasing on $(1, \infty)$  for all fixed $a,x \in \mathbb{R}$. Differentiating (\ref{logs}) with respect to $p$, we arrive at what is called  ``the log-Sobolev inequality on the two-point space'' (see ~\cite{Gross}).  

Ideally, we would like to use the same idea when proving (\ref{in3}). The first obstacle is that the  factor $c(t)$ does not have the  desired quotient structure. Nevertheless, using  (\ref{maxc}) and the representation (\ref{temp08}), we can fix this issue estimating $c(t)$ from below as  
\begin{align}\label{infl}
c(t)  = \sup_{\beta \in \mathbb{R}} \frac{\sqrt{1+(p-2)\cos^{2}(t+\beta)}}{\sqrt{1+(p-2)\cos^{2}(\beta)}} \stackrel{\beta = -a-t}{\geq} \frac{\sqrt{1+(p-2)\cos^{2}(a)}}{\sqrt{1+(p-2)\cos^{2}(a+t)}}.
\end{align}
Therefore,  (\ref{in3})  with fixed $p>2$ is implied by (but no longer equivalent to) the statement that the mapping  
\begin{align}\label{map1}
s \mapsto &\left(1+\frac{x^{2}}{1+(p-2)\cos^{2}(s)}  + \frac{2x \cos(s)}{\sqrt{1+(p-2)\cos^{2}(s)}} \right)^{p/2} \\
+ &\left(1+\frac{x^{2}}{1+(p-2)\cos^{2}(s)}  - \frac{2x \cos(s)}{\sqrt{1+(p-2)\cos^{2}(s)}} \right)^{p/2}\nonumber
\end{align}
is increasing on $[0, \pi/2]$ for all $x \geq 0$ (and the fact that for $p>2$ the left hand side of (\ref{in3}) is increasing in $c$). This statement  seems to be a right substitute for the log-Sobolev inequality in the complex contractivity case. The next lemma shows that,  unfortunately, this monotonicity holds only for $p\geq 3$. 
\begin{lemma}[``Mock log-Sobolev inequality'']\label{imlog}
Let $p>2$. The map (\ref{map1}) is increasing on $[0, \pi/2]$ for all fixed $x \in \mathbb{R}$ if and only if  $p \geq 3$. 
\end{lemma}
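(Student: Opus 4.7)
The plan is to reduce the inequality to a single-variable one that can be attacked with the convexity of $\operatorname{arctanh}$.

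First I substitute $u = \cos s$, so the map is increasing in $s \in [0,\pi/2]$ iff it is decreasing in $u \in [0,1]$. Writing $H(u) = 1+(p-2)u^2$, $\alpha = xu/\sqrt{H}$, $\beta^2 = x^2(1-u^2)/H$, $R = 1+\alpha^2+\beta^2$, and $P_\pm = R \pm 2\alpha$, the two base terms of the map are exactly $P_\pm$. The identity $H - (p-2)u^2 = 1$ gives the clean formula $\frac{d}{du}(u/\sqrt{H}) = H^{-3/2}$. By symmetry I may take $x \geq 0$; differentiating $F = P_+^{p/2}+P_-^{p/2}$ and cancelling positive factors, $dF/du \leq 0$ becomes
\begin{equation*}
\frac{P_+^{p/2-1} - P_-^{p/2-1}}{P_+^{p/2-1} + P_-^{p/2-1}} \leq (p-2)\alpha. \qquad (*)
\end{equation*}

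Next I collapse $(*)$ to one variable. The left-hand side depends on $(\alpha,\beta)$ only through $\rho := 2\alpha/R \in [0,1]$, since $P_\pm = R(1\pm\rho)$, whereas the right-hand side equals $q R \rho$ with $q := p/2-1$. For fixed $\rho$ the constraint $R \geq 1 + \alpha^2$ forces $R \geq R_-(\rho) := 2(1-\sqrt{1-\rho^2})/\rho^2$, and the RHS is minimized at $R = R_-$. So $(*)$ holds for all admissible $(\alpha,\beta)$ iff
\begin{equation*}
\frac{(1+\rho)^q - (1-\rho)^q}{(1+\rho)^q + (1-\rho)^q} \leq \frac{2q(1-\sqrt{1-\rho^2})}{\rho}, \quad \rho \in [0,1). \qquad (\star)
\end{equation*}
The substitution $\rho = \sin\theta$ with $\tau := \tan(\theta/2) \in [0,1)$ and $1 \pm \sin\theta = (\cos(\theta/2)\pm\sin(\theta/2))^2$ turns $(\star)$ into
\begin{equation*}
\frac{(1+\tau)^r - (1-\tau)^r}{(1+\tau)^r + (1-\tau)^r} \leq r\tau, \quad r := p-2. \qquad (\heartsuit)
\end{equation*}

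Finally, I show $(\heartsuit)$ holds for all $\tau \in [0,1)$ iff $r \geq 1$. Necessity comes from $\tau \to 1^-$: LHS $\to 1$, RHS $\to r$. For sufficiency with $r \geq 1$: if $r\tau \geq 1$ then LHS $< 1 \leq r\tau$; if $r\tau < 1$, rearrange $(\heartsuit)$ as $\bigl(\tfrac{1+\tau}{1-\tau}\bigr)^r \leq \tfrac{1+r\tau}{1-r\tau}$, i.e., $r\operatorname{arctanh}(\tau) \leq \operatorname{arctanh}(r\tau)$. Since $\operatorname{arctanh}$ is convex on $[0,1)$ and $\tau = \tfrac{1}{r}(r\tau) + (1-\tfrac{1}{r}) \cdot 0$ is a convex combination when $r \geq 1$, Jensen's inequality yields the claim. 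The failure case for $r < 1$ (at $\tau$ near $1$) corresponds in original coordinates to $s$ near $0$ with $x$ near $\sqrt{p-1}$, which is where pointwise monotonicity first breaks.

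The main obstacle is the decoupling step: recognizing that the two variables $R$ and $\rho$ in $(*)$ decouple (the LHS depends on $\rho$ alone and the RHS is monotone in $R$) and minimizing over the admissible range of $R$ yields a clean one-variable inequality. Once this reduction is done, the trigonometric substitution is pleasant algebra and convexity of $\operatorname{arctanh}$ finishes in one line.
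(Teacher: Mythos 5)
Your proof is correct, and its skeleton runs parallel to the paper's: after differentiating, your inequality $(*)$ is literally the paper's criterion \eqref{we2} in different variables (the paper differentiates in $b=1/(1+(p-2)\cos^2 s)$, you in $u=\cos s$; with $\alpha=x\cos(s)/\sqrt{1+(p-2)\cos^2 s}$ the two conditions coincide), and your scalar inequality $r\operatorname{arctanh}(\tau)\leq\operatorname{arctanh}(r\tau)$ is equivalent to the paper's two-point inequality \eqref{ax01} under $\alpha=1/r$, $k=r\tau$. Where you genuinely diverge is in how the general ($\beta\neq 0$) case is reduced to that scalar fact: the paper fixes $\alpha$ and uses the monotonicity of $s\mapsto\frac{A+s}{B+s}$ together with $1-b\leq b(p-2)$ to compare $P_-/P_+$ with $\bigl(\tfrac{1-\alpha}{1+\alpha}\bigr)^2$, while you fix $\rho=2\alpha/R$, observe that the left side of $(*)$ depends on $\rho$ alone and the right side is increasing in $R$, and minimize over the admissible range $R\geq R_-(\rho)$, which lands you exactly on the $\beta=0$ boundary; your proof of the scalar inequality via convexity of $\operatorname{arctanh}$ and Jensen also replaces the paper's power-series principle. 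Each choice buys something: your decoupling makes the extremality of the $s=0$ slice conceptually transparent and gives the if-and-only-if in one stroke (the same $\tau\to 1^-$ limit that proves necessity), whereas the paper's series principle is what it reuses to exhibit the explicit reversed inequality at $b=1/(p-1)$ for $p\in(2,3)$. One small point worth making explicit in your write-up: the failing configuration you exhibit sits at the endpoint $u=1$ (i.e.\ $s=0$), where $\psi'(0)=0$ because of the $\sin s$ factor; you should add the one-line continuity remark that strict failure of $(*)$ at $u=1$ persists for $u$ slightly below $1$, so $\psi$ is strictly decreasing on a right neighborhood of $0$, which is what actually contradicts monotonicity.
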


\begin{proof}
 Denote $b = \frac{1}{1+(p-2)\cos^{2}(s)} \in [\frac{1}{p-1},1]$ and $u=\frac{x}{\sqrt{p-2}} \in \mathbb{R}$. We want to show that the map 
 \begin{align*}
 b \mapsto \left(1+bu^{2}(p-2)+2u \sqrt{1-b}\right)^{p/2}+\left(1+bu^{2}(p-2)-2u \sqrt{1-b}\right)^{p/2}
 \end{align*}
 is increasing on $[\frac{1}{p-1},1]$. Without loss of generality, assume $u \geq 0$. After taking the derivative with respect to $b$, we end up with showing that  
 \begin{align}\label{we2}
\left(\frac{1+bu^{2}(p-2)-2u\sqrt{1-b}}{1+bu^{2}(p-2)+2u\sqrt{1-b}}\right)^{p/2-1} -\frac{1-u(p-2)\sqrt{1-b}}{1+u(p-2)\sqrt{1-b}}\geq 0.
 \end{align}
 If $b=\frac{1}{p-1}$, then  (\ref{we2}) takes the form 
 \begin{align}\label{mmk1}
  \left| \frac{1-u \sqrt{\frac{p-2}{p-1}}}{1+u \sqrt{\frac{p-2}{p-1}}}\right|^{p-2}- \frac{1 - u\sqrt{\frac{p-2}{p-1}} (p-2)}{1+u\sqrt{\frac{p-2}{p-1}} (p-2)} \geq 0.
 \end{align}
 Denote $u \sqrt{\frac{p-2}{p-1}}=k \geq 0$ and $p-2=\alpha\geq 0$. If $\alpha \in (0,1)$ and $k \in (0,1)$, then   
 \begin{align}\label{ax01}
 \left|\frac{1-k}{1+k}\right|^{\alpha} \leq \frac{1-k\alpha}{1+k\alpha}.
 \end{align}
 Indeed, it follows from the following general principle: if $a_{k}\geq 0$ and the function $g(x) = 1-\sum_{k \geq 1} a_{k} x^{k}>0$ on  $(-1,1)$, then 
 \begin{align*}
 \frac{g(x)}{g(-x)} = \frac{1-a_{1}x-\sum_{k \geq 2} a_{k} x^{k}}{1+a_{1}x-\sum_{k \geq 2} a_{k} (-1)^{k}x^{k}} \leq \frac{1-a_{1}x}{1+a_{1}x}, \quad x \in [0,1)
 \end{align*}
 because of the inequality $\sum_{k \geq 2} a_{k} x^{k} \geq \sum_{k \geq 2} a_{k} (-1)^{k}x^{k}$ and the fact that the mapping $s \mapsto  \frac{A+s}{B+s}$ is increasing when $A \leq B$ and both the numerator and the denominator are nonnegative. Note that $g(k)=(1-k)^{\alpha}$ satisfies the assumptions of  this principle when $\alpha \in (0,1)$. Thus for $p\in (2,3)$ we obtain the inequality which is reverse to (\ref{mmk1}).

Let now that $p\geq 3$. In (\ref{we2}) we can assume that $u\geq 0$ is such that $1-u(p-2)\sqrt{1-b}>0$, otherwise there is nothing to prove. We have 
\begin{align*}
\left[ \left(\frac{1-u(p-2)\sqrt{1-b}}{1+u(p-2)\sqrt{1-b}}\right)^{\frac{1}{p-2}}\right]^{2} \stackrel{(\ref{ax01})}{\leq} \left[\frac{1-u\sqrt{1-b}}{1+u\sqrt{1-b}}\right]^{2} &=\frac{1-2u\sqrt{1-b}+u^{2}(1-b)}{1+2u\sqrt{1-b}+u^{2}(1-b)} \\
& \leq \frac{1-2u\sqrt{1-b}+bu^{2}(p-2)}{1+2u\sqrt{1-b}+bu^{2}(p-2)},
\end{align*}
where in the last inequality  we used the above observation about the monotonicity of the mapping $s \mapsto  \frac{A+s}{B+s}$  again (note that $1-b\leq b(p-2)$ since $b \geq \frac{1}{p-1}$). The obtained inequality is the same as (\ref{we2}).  The lemma is proved.

 \end{proof} 

Clearly the lemma proves inequality (\ref{in3}) in the case $p\geq 3$. In particular, we just reproved the $p=q$ case of the theorem of Weissler, i.e.,  showed that the conjecture holds   for all $p \geq 1$ except $p \in (3/2,2)\cup(2,3)$. The reader may think that the argument presented in this section is different from the one of Weissler \cite{W1979} because, for example, Weissler uses non-trivial estimates for a certain implicitly defined function.  Nevertheless, we should say that both arguments are essentially the same because  they use the inequality (\ref{infl}) and the monotonicity expressed by the mock log-Sobolev inequality.  

Before we move to the case $p \in (2,3)$, let us explain in the next section that  the monotonicity approach  we just presented  cannot be adapted to that case.

\subsection{Why is the case  $p \in (2,3)$  difficult? Uniqueness lemma}

Weissler writes in his paper (see a remark on page 117 in \cite{W1979}) ``Even though Proposition 7 is false without the condition $p\geq 3$, one should not give hope for (3.9)''. Without going into the details, this remark says that the reason the monotonicity argument (\ref{map1}) fails when $p\in (2,3)$ is because the estimate (\ref{infl})  was too rough. One could hope that there might  be a better substitute for (\ref{infl}). However, we will now show that this is not the case and thus  one should give up on the  chase for ``monotone quantities'' when $p \in (2,3)$. 

Let $f \in C^{1}([0, \pi/2])$, $f>0$, be  such that 
\begin{align}
c(t) \geq \frac{f(a)}{f(a+t)} \quad \text{for all} \; 0\leq a \leq a+t\leq \pi/2 \quad \label{prikin0}
\end{align}
and the map 
\begin{align}
\psi(s) = \left(1+\frac{x^{2}}{f^{2}(s)}+\frac{2x\cos(s)}{f(s)} \right)^{p/2}+\left(1+\frac{x^{2}}{f^{2}(s)}-\frac{2x\cos(s)}{f(s)} \right)^{p/2} \label{prikin} 
\end{align}
is increasing on $[0, \pi/2]$ for all $x\geq 0$.
Clearly, as we have seen in the previous section, if such $f$ exists, then the two-point inequality (\ref{in3}) follows.  
\begin{lemma}[Uniqueness of the mock log-Sobolev inequality]\label{uniq}
If (\ref{prikin0}) holds  and the map given by (\ref{prikin}) is increasing on $[0, \pi/2]$ for all $x\geq 0$, then necessarily  $f(s) = C \sqrt{1+(p-2)\cos^{2}(s)}$ on $[0, \pi/2]$ for some constant $C>0$. 
\end{lemma}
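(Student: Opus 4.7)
The plan is to combine two complementary pieces of information. Writing $h(s) := \sqrt{1+(p-2)\cos^{2} s}$, I will show first that hypothesis~(\ref{prikin}) forces $f/h$ to be nonincreasing on $[0,\pi/2]$, and second that hypothesis~(\ref{prikin0}), applied at the single point $a = 0$, $t = \pi/2$, forces $f(0)/f(\pi/2) \leq \sqrt{p-1}$. Since $h(0)/h(\pi/2) = \sqrt{p-1}$ by~(\ref{defr}), the two inequalities saturate each other, and a nonincreasing function with equal values at the endpoints of $[0,\pi/2]$ must be constant; this will identify $f = Ch$.

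For the first step, I would Taylor expand $\psi(s)$ in powers of $x$. Writing $v = (x/f(s))e^{is}$, we have $\psi(s) = |1+v|^{p} + |1-v|^{p}$. Using $(1\pm v)^{p/2}(1\pm \bar v)^{p/2}$ and collecting the $|v|^{2}$ terms yields the coefficient $(p/2)^{2} + 2\binom{p/2}{2}\cos(2s) = (p/2)[p+(p-2)\cos(2s)] = p\,h^{2}(s)$, so that
\[
\psi(s) \;=\; 2 \;+\; \frac{p\,h^{2}(s)}{f^{2}(s)}\,x^{2} \;+\; O(x^{4})
\]
locally uniformly in $s$ as $x \to 0^{+}$ (uniformity holds because $f$ is continuous and positive on the compact interval $[0,\pi/2]$). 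If $h^{2}/f^{2}$ were strictly decreasing between two points $s_{1} < s_{2}$ in $[0,\pi/2]$, then for sufficiently small $x>0$ the quadratic term would dominate and $\psi(s_{2}) < \psi(s_{1})$, contradicting the monotonicity in~(\ref{prikin}). Hence $h^{2}/f^{2}$ is nondecreasing, equivalently $f/h$ is nonincreasing; in particular, using $h(0) = \sqrt{p-1}$ and $h(\pi/2) = 1$ from~(\ref{defr}), we obtain $f(0)/f(\pi/2) \geq \sqrt{p-1}$.

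For the second step, (\ref{defr}) together with $c = 1/r$ gives $c(\pi/2) = \sqrt{p-1}$, and plugging $a = 0$, $t = \pi/2$ into~(\ref{prikin0}) yields the reverse endpoint bound $f(0)/f(\pi/2) \leq \sqrt{p-1}$. Combining both, $(f/h)(0) = f(0)/\sqrt{p-1} = f(\pi/2) = (f/h)(\pi/2)$; since $f/h$ is nonincreasing on $[0,\pi/2]$, equal endpoint values force it to be constant, and $f(s) = C h(s)$ with $C = f(0)/\sqrt{p-1}>0$. There is no essential obstacle: the argument uses only the $x\to 0$ asymptotics of~(\ref{prikin}) and a single evaluation of~(\ref{prikin0}). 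The one noteworthy point is the coincidence that $c(\pi/2)$ equals exactly $h(0)/h(\pi/2)$, so that the bound coming from~(\ref{prikin0}) is perfectly sharp against the one coming from~(\ref{prikin}); this saturation is precisely what makes $h$ the only admissible $f$ and, by extension, what shows the mock log-Sobolev approach of the previous section cannot be salvaged by any other choice of $f$ in the range $p\in(2,3)$.
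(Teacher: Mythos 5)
Your proof is correct and follows essentially the paper's own route: the second-order expansion of $\psi$ as $x\to 0$ forces $s\mapsto \frac{1+(p-2)\cos^{2}(s)}{f^{2}(s)}$ to be nondecreasing, and (\ref{prikin0}) evaluated at $a=0$, $t=\pi/2$ together with $c(\pi/2)=\sqrt{p-1}$ gives the reverse endpoint bound, so the ratio must be constant --- the paper merely encodes the monotonicity as the differential inequality (\ref{bol44}) and integrates it over $[0,\pi/2]$, whereas you compare the endpoint values of $f/h$ directly. (There is only a harmless factor-of-two slip in your intermediate coefficient: $(p/2)^{2}+2\binom{p/2}{2}\cos(2s)=\tfrac{p}{4}\left[p+(p-2)\cos(2s)\right]$ is the contribution of a single term $|1\pm v|^{p}$, and doubling it yields the stated, correct, total coefficient $p\,\frac{1+(p-2)\cos^{2}(s)}{f^{2}(s)}$.)
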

In other words, the lemma says that one needs to come up with a different approach to prove (\ref{in3}) when $p\in (2,3)$. 

\begin{proof}

Notice that when $x \approx 0$, the map (\ref{prikin}) behaves as 
\begin{align*}
\psi(s) = 2+\frac{p(1+(p-2)\cos^{2}(s))}{f(s)^{2}}x^{2}+O(x^{4}).
\end{align*}
Therefore, the map  
\begin{align}\label{xashi}
h(s) = \frac{1+(p-2)\cos^{2}(s)}{f(s)^{2}}
\end{align}
 should be increasing on $[0, \pi/2]$. The latter together with (\ref{prikin0}) implies that 
 \begin{align}\label{utoloba}
 c(t)\geq \frac{f(a)}{f(a+t)}\geq \frac{\sqrt{1+(p-2)\cos^{2}(a)}}{\sqrt{1+(p-2)\cos^{2}(a+t)}}.
 \end{align}
 
 Next we claim that $h$ is constant on $[0,\pi/2]$. To prove the claim,  we notice that the monotonicity of $h$, i.e.,  the condition $h'(s)\geq 0$, can be written as 
 \begin{align}\label{bol44}
\frac{d}{ds} \ln(f(s)) \leq \frac{d}{ds} \ln (g(s))
 \end{align}
 where $g(s) = \sqrt{1+(p-2)\cos^{2}(s)}$. Integrating (\ref{bol44}) over the interval $[0,\pi/2]$ with respect to $s$, we obtain 
 \begin{align*}
 \frac{1}{c(\pi/2)}\stackrel{(\ref{utoloba})}{\leq} \frac{f(\pi/2)}{f(0)} \stackrel{(\ref{bol44})}{\leq} \frac{g(\pi/2)}{g(0)} = \frac{1}{\sqrt{p-1}} \stackrel{(\ref{defr})}{=}r(\pi/2) \stackrel{(\ref{maxc})}{=} \frac{1}{c(\pi/2)},
 \end{align*}
 which means that (\ref{bol44}) must be an equality for all $s \in (0, \pi/2)$ and, thereby, 
$f(s)=Cg(s)$ on $(0, \pi/2)$. The claim, and hence the lemma is proved.

\end{proof}

\subsection{Self-improvement and hidden invariance in the two-point inequality}

\begin{lemma}\label{dadis2}
It is enough to check  (\ref{in1}) for  $0\leq c(t)y \leq 1$. 
\end{lemma}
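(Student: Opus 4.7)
The plan is to exploit the Möbius-type involution
\[
\Psi(u,\theta)\;=\;u^{p}\,\Psi(1/u,\theta),\qquad u>0,
\]
enjoyed by the two-point functional $\Psi(u,\theta):=(1+u^{2}+2u\cos\theta)^{p/2}+(1+u^{2}-2u\cos\theta)^{p/2}$, in terms of which (\ref{in1}) reads $\Psi(c(t)y,a+t)\geq\Psi(y,a)$. The map $y\mapsto y':=1/(c^{2}y)$ fixes the self-dual point $y=1/c$ and sends the regime $\{cy\geq 1\}$ to $\{cy'\leq 1\}$. Thus, if (\ref{in1}) has been established whenever $cy\leq 1$, and a point $y>1/c$ is given, the hypothesis applies at the reflected point $y'$ and yields $\Psi(1/(cy),a+t)\geq\Psi(1/(c^{2}y),a)$. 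Applying the involution to both sides converts this bound into
\[
\Psi(cy,a+t)\;\geq\;c^{-p}\,\Psi(c^{2}y,a).
\]

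To close the argument I would then prove the auxiliary single-angle estimate
\[
\Psi(c^{2}y,a)\;\geq\;c^{p}\,\Psi(y,a),\qquad cy\geq 1.
\]
This bound concerns only the slice $u\mapsto\Psi(u,a)$ and does not invoke the unproved two-point inequality. The key algebraic observation is the factorization
\[
\Psi(u,a)\;=\;u^{p/2}\,\widetilde\Psi(u+1/u,a),\qquad \widetilde\Psi(s,a)=(s+2\cos a)^{p/2}+(s-2\cos a)^{p/2},
\]
together with the monotonicity of $s\mapsto\widetilde\Psi(s,a)$ on $[2,\infty)$ (note that $s\geq 2\geq 2|\cos a|$, so both base terms are nonnegative). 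Since $(c^{2}y)^{p/2}=c^{p}\,y^{p/2}$, the prefactor $c^{p}$ cancels exactly, and the auxiliary estimate reduces to $c^{2}y+(c^{2}y)^{-1}\geq y+y^{-1}$, which is $(c^{2}-1)(c^{2}y^{2}-1)/(c^{2}y)\geq 0$ — manifestly true because $c\geq 1$ and $cy\geq 1$. Combining the two displays gives $\Psi(cy,a+t)\geq\Psi(y,a)$, as required.

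The only real content is spotting the correct involution $y\mapsto 1/(c^{2}y)$, whose fixed point is precisely the self-dual value $cy=1$ appearing in the statement; the bookkeeping with the $u^{p}$ weight in $\Psi(u,\theta)=u^{p}\Psi(1/u,\theta)$ and the monotonicity in $s=u+1/u$ are both routine. No $p$-dependent case split and no appeal to the yet-to-be-proved two-point inequality in its full strength are needed — the lemma is a purely structural consequence of the $u\leftrightarrow 1/u$ symmetry of $\Psi$, which is why the author names the subsection "Self-improvement and hidden invariance".
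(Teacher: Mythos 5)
Your proof is correct and is essentially the paper's own argument in different notation: the involution $\Psi(u,\theta)=u^{p}\Psi(1/u,\theta)$ is exactly the paper's step of dividing (\ref{in1}) by $(cy)^{p}$, the reflected point $\tilde y=1/(c^{2}y)$ is the same substitution, and your auxiliary estimate $\Psi(c^{2}y,a)\geq c^{p}\Psi(y,a)$ is precisely inequality (\ref{in2}), proved by the same factorization $(c^{2}-1)(c^{2}y^{2}-1)\geq 0$ and the same monotonicity principle. No substantive difference.
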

\begin{proof}
Assuming that the inequality (\ref{in1}) holds with some fixed $a$ and $t$ for all $y$ satisfying $0 \leq cy \leq 1$ where    $c=c(t)$, we show that it also holds with the same $a,t$ for the case when $cy >1$. Fix $y$ such that $cy>1$. 
 Dividing both sides of the inequality by $(cy)^{p}$,  we can rewrite (\ref{in1}) as 
\begin{align}\label{referi1}
\left| \frac{1}{cy}+e^{i(t+a)}\right|^{p}+\left| \frac{1}{cy}-e^{i(t+a)}\right|^{p} \geq \left| \frac{1}{cy}+e^{ia}\frac{1}{c}\right|^{p}+\left| \frac{1}{cy}-e^{ia}\frac{1}{c}\right|^{p}.
\end{align}
Using the identities $\left| \frac{1}{cy}\pm e^{i(t+a)}\right|^{p} = \left| \frac{1}{cy}e^{i(t+a)}\pm1\right|^{p}$ and $\left| \frac{1}{cy}\pm e^{ia}\frac{1}{c}\right|^{p} =\left| \frac{1}{cy}e^{ia}\pm \frac{1}{c}\right|^{p}$ we can rewrite (\ref{referi1}) as 
\begin{align*}
\left| \frac{1}{cy}e^{i(t+a)}+1\right|^{p}+\left| \frac{1}{cy}e^{i(t+a)}-1\right|^{p} \geq \left| \frac{1}{cy}e^{ia}+\frac{1}{c}\right|^{p}+\left| \frac{1}{cy}e^{ia}-\frac{1}{c}\right|^{p}.
\end{align*}
To verify the latter inequality let $\tilde{y} = \frac{1}{c^{2}y}$. Then  $c\tilde{y}<1$, i.e., we are in the range in which we assumed the validity of the estimate  (\ref{in1}) for the pair $c, \tilde{y}$. Applying (\ref{in1}) to $c, \tilde{y}$ we obtain  
\begin{align*}
\left| \frac{1}{cy}e^{i(t+a)}+1\right|^{p}+\left| \frac{1}{cy}e^{i(t+a)}-1\right|^{p}  \geq 
\left| \frac{1}{c^{2}y}e^{ia}+1\right|^{p}+\left| \frac{1}{c^{2}y}e^{ia}-1\right|^{p}.
\end{align*}
Next, we claim that 
\begin{align*}
\left| \frac{1}{c^{2}y}e^{ia}+1\right|^{p}+\left| \frac{1}{c^{2}y}e^{ia}-1\right|^{p} \geq 
\left| \frac{1}{cy}e^{ia}+\frac{1}{c}\right|^{p}+\left| \frac{1}{cy}e^{ia}-\frac{1}{c}\right|^{p}.
\end{align*}
Indeed, after multiplying both sides of the inequality by $c^{p}$, we can rewrite the latter estimate as        
\begin{align}\label{in2}
\left(c^{2}+\frac{1}{c^{2}y^{2}}+\frac{2}{y}\cos(a)\right)^{p/2}&+\left(c^{2}+\frac{1}{c^{2}y^{2}}-\frac{2}{y}\cos(a)\right)^{p/2}\geq \\
\left(1+\frac{1}{y^{2}}+\frac{2}{y}\cos(a)\right)^{p/2}&+\left(1+\frac{1}{y^{2}}-\frac{2}{y}\cos(a)\right)^{p/2}.\nonumber
\end{align}
Next, notice that 
\begin{align*}
c^{2}+\frac{1}{c^{2}y^{2}} -\left( 1+\frac{1}{y^{2}}\right) = \frac{(c^{2}-1)(c^{2}y^{2}-1)}{c^{2}y^{2}}\geq 0,
\end{align*}
where we have used the fact that $c \geq 1$ and $cy>1$. Therefore    (\ref{in2}) follows from the fact that the mapping 
$s \mapsto (s+A)^{p/2}+(s-A)^{p/2}$ is increasing on $[A, \infty)$.  Thus (\ref{in1}) holds for all $y \geq 0$. 
\end{proof}

\subsection{From multiplicativity to additivity:  chasing the fourth order terms}
Let $p \in (2,3)$. We only need to verify (\ref{in1}) in the regime when $0\leq a \leq a+t \leq \frac{\pi}{2}$ and $0\leq c y \leq 1$ where $c=c(t)$. Indeed, assuming that (\ref{in1}) is proved for such $a,t,y$, Lemma~\ref{dadis2} allows us to extend the range of $y$ to $[0, +\infty)$ keeping the restriction on $a,t$ only.  On the other hand  Lemma~\ref{dadis} says that if (\ref{in1}) holds for some $y \geq 0$ and all $a,t$ such that $0\leq a \leq a+t\leq \pi/2$ then it holds for the same $y$ and all $a,t$.

We would like to prove the inequality 
\begin{align}\label{in5}
(1+c^{2}y^{2} + 2cy \cos(a+t))^{s}&+(1+c^{2}y^{2} - 2cy \cos(a+t))^{s}\geq\\
 (1+y^{2} + 2y \cos(a))^{s}&+(1+y^{2} - 2y \cos(a))^{s}.\nonumber
\end{align}
where 
\begin{align}
s = \frac{p}{2}\in \left(1, \frac{3}{2}\right),  \quad c=c(t) = \frac{1}{r(t)} \in \left[1,\sqrt{p-1}\right]. \label{cic} 
\end{align}

Dividing both sides of (\ref{in5}) by $2(1+y^{2})^{s}$ and expanding both sides  into power series, we can rewrite (\ref{in5}) as 
\begin{align*}
\left(\frac{1+c^{2}y^{2}}{1+y^{2}}\right)^{s}\sum_{\ell=0}^{\infty} \left(\frac{2cy \cos(a+t)}{1+c^{2}y^{2}}\right)^{2\ell} \binom{s}{2\ell}\geq  \sum_{\ell=0}^{\infty} \left(\frac{2y \cos(a)}{1+y^{2}}\right)^{2\ell} \binom{s}{2\ell}.
\end{align*}
We can estimate the left hand side as  
\begin{align*}
&LHS = \left(\frac{1+c^{2}y^{2}}{1+y^{2}}\right)^{s}\sum_{\ell=0}^{\infty} \left(\frac{2cy \cos(a+t)}{1+c^{2}y^{2}}\right)^{2\ell} \binom{s}{2\ell}\geq \\
&\left(\frac{1+c^{2}y^{2}}{1+y^{2}}\right)^{s} + \left(\frac{1+c^{2}y^{2}}{1+y^{2}}\right)^{s}\left(\frac{2cy \cos(a+t)}{1+c^{2}y^{2}}\right)^{2} \binom{s}{2} + \sum_{\ell=2}^{\infty}\left(\frac{2cy \cos(a+t)}{1+c^{2}y^{2}}\right)^{2\ell} \binom{s}{2\ell}\geq \\
&\left(\frac{1+c^{2}y^{2}}{1+y^{2}}\right)^{s} + \left(\frac{1+c^{2}y^{2}}{1+y^{2}}\right)^{s}\left(\frac{2cy \cos(a+t)}{1+c^{2}y^{2}}\right)^{2} \binom{s}{2} + \sum_{\ell=2}^{\infty}\left(\frac{2y \cos(a+t)}{1+y^{2}}\right)^{2\ell} \binom{s}{2\ell}.
\end{align*}
In the first inequality we used the fact that $\frac{1+c^{2}y^{2}}{1+y^{2}}\geq 1$ and $\binom{s}{2\ell}\geq 0$. In the second inequality we used the fact that 
\begin{align*}
\frac{cy}{1+c^{2}y^{2}} - \frac{y}{1+y^{2}} = \frac{y(c-1)(1-cy^{2})}{(1+c^{2}y^{2})(1+y^{2})}\geq 0
\end{align*}
which is true because $0\leq cy\leq 1$ and $c\geq 1$.

The right hand side can be rewritten as 
\begin{align*}
RHS = 1 + \left(\frac{2y \cos(a)}{1+y^{2}}\right)^{2} \binom{s}{2} + \sum_{\ell=2}^{\infty}\left(\frac{2y \cos(a)}{1+y^{2}}\right)^{2\ell} \binom{s}{2\ell}.
\end{align*}

Thus, it suffices to prove the inequality 
\begin{align}\label{in6}
&\left(\frac{1+c^{2}y^{2}}{1+y^{2}}\right)^{s} + \left(\frac{1+c^{2}y^{2}}{1+y^{2}}\right)^{s}\left(\frac{2cy \cos(a+t)}{1+c^{2}y^{2}}\right)^{2} \binom{s}{2}-1-\left(\frac{2y \cos(a)}{1+y^{2}}\right)^{2} \binom{s}{2} \\
&\geq  \sum_{\ell=2}^{\infty}\left(\frac{2y}{1+y^{2}}\right)^{2\ell}\left(\cos^{2\ell}(a)-\cos^{2\ell}(a+t) \right) \binom{s}{2\ell}.\nonumber
\end{align}

\subsection{Contribution of the infinite series}
In this section we prove the following key lemma  which gives the upper bound for the infinite series on the right hand side of (\ref{in6}). 
\begin{lemma}\label{utol5}
We have 
\begin{align*}
&\sum_{\ell=2}^{\infty} \left(\frac{2y}{1+y^{2}}\right)^{2\ell}\left(\cos^{2\ell}(a) - \cos^{2\ell}(a+t)\right)\binom{s}{2\ell} \leq \\
&\frac{\sqrt{3}}{4}\cdot \frac{s(s-1)(s-2)(s-3)}{2} \left(\frac{2y}{1+y^{2}}\right)^{2} y^{2}\sin(t)
\end{align*}
for all $0\leq a \leq a+t\leq \pi/2$, $y\geq 0$, and  $s \in (1,3/2)$.
\end{lemma}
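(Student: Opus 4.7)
The plan is to rewrite the infinite sum as a difference of values of an explicit analytic function and then combine a single sharp trigonometric inequality with the algebraic relation $u = 2y/(1+y^{2})$ to produce the exact constant $\sqrt{3}/4$.

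First I would exploit a generating-function identification. Setting $v_{0}=u\cos a$ and $v_{1}=u\cos(a+t)$ so that $0\leq v_{1}\leq v_{0}\leq u\leq 1$, the left-hand side equals $H(v_{0})-H(v_{1})$ where
\[
H(v) := \sum_{\ell\geq 2}\binom{s}{2\ell}v^{2\ell} = \frac{(1+v)^{s}+(1-v)^{s}}{2} - 1 - \binom{s}{2}v^{2}.
\]
Differentiating, $H'(v)=\tfrac{s}{2}G(v)$ with $G(v)=(1+v)^{s-1}-(1-v)^{s-1}-2(s-1)v$. For $s\in(1,3/2)$ every odd-index Taylor coefficient $2\binom{s-1}{2k+1}$ of $G$ is positive, so $G$ is nondecreasing on $[0,1]$ with $G(v)=\tfrac{(s-1)(s-2)(s-3)}{3}v^{3}+O(v^{5})$ near the origin, and the leading contribution to $H(v_{0})-H(v_{1})$ is $\binom{s}{4}(v_{0}^{4}-v_{1}^{4})$.

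Second, I would prove the sharp pointwise estimate
\[
\sin(2a+t)\bigl(\cos^{2}a+\cos^{2}(a+t)\bigr) \leq \frac{3\sqrt{3}}{4} \quad\text{for }0\leq a\leq a+t\leq \pi/2,
\]
with supremum attained in the limit $t\to 0^{+}$, $a\to \pi/6$. Using $\cos^{2}a+\cos^{2}(a+t)=1+\cos(2a+t)\cos t$ and setting $\alpha=2a+t$, this reduces to the one-variable maximization of $\sin\alpha(1+\cos\alpha\cos t)$; the integrand is monotone in $t$ on the admissible region, and the optimum is achieved at $\cos\alpha=1/2$, $t=0$. Equivalently, $\cos^{4}a-\cos^{4}(a+t)\leq \tfrac{3\sqrt{3}}{4}\sin t$, which is exactly where the constant $\sqrt{3}/4$ enters.

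Finally I would combine the two ingredients. The $\ell=2$ term of the expansion equals $\binom{s}{4}(v_{0}^{4}-v_{1}^{4})=\binom{s}{4}u^{4}\sin t\cdot\sin(2a+t)(\cos^{2}a+\cos^{2}(a+t))$, which by the previous step is bounded by $\tfrac{3\sqrt{3}}{4}\binom{s}{4}u^{4}\sin t$. Since $u^{2}=4y^{2}/(1+y^{2})^{2}\leq 4y^{2}$, this is already at most $3\sqrt{3}\binom{s}{4}u^{2}y^{2}\sin t$, matching the right-hand side, with equality attained in the extremal limit $y\to 0$, $t\to 0^{+}$, $a\to \pi/6$. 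For generic $(a,t,y)$ the leading term uses only the fraction $1/(1+y^{2})^{2}$ of the right-hand side, leaving $y^{2}(2+y^{2})/(1+y^{2})^{2}$ of slack to absorb $\sum_{\ell\geq 3}\binom{s}{2\ell}(v_{0}^{2\ell}-v_{1}^{2\ell})$. To handle that tail I would use the monotonicity $\binom{s}{2\ell}\leq \binom{s}{4}$ for $\ell\geq 2$ (which follows from $(2\ell-s)(2\ell+1-s)\leq(2\ell+1)(2\ell+2)$ for $s\geq 0$) together with $v_{0}^{2\ell}-v_{1}^{2\ell}\leq \ell\, v_{0}^{2\ell-4}(v_{0}^{4}-v_{1}^{4})$, reducing the tail to a convergent geometric series in $v_{0}^{2}\leq u^{2}$ that fits comfortably inside the available slack.

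The main obstacle is the tail control: the inequality is genuinely sharp in the three-fold limit above, so every estimate on the $\ell\geq 3$ contribution has to be of the correct order in $u$, $y$, and $t$ simultaneously. Naive substitutes such as $G(u\cos\tau)\leq G(u)\cos^{3}\tau$ (integrated against $\sin\tau$) lose a factor of roughly four at leading order and fail outright as $s\to 1^{+}$, where $\binom{s}{4}\to 0$ while $G(1)$ does not. Keeping the sharp trigonometric bound tied to the precise $v^{4}$ leading term of $H$, and using the $(1+y^{2})^{2}$ slack rather than a uniform majorant on $G$, is what makes the argument close.
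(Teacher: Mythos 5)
Your first two ingredients are sound and in fact coincide with the paper's leading-order computation: the identity $\sup_{x}4\cos^{3}(x)\sin(x)=\tfrac{3\sqrt3}{4}$ is exactly where the paper's constant comes from, and your sharp bound $\cos^{4}a-\cos^{4}(a+t)\le\tfrac{3\sqrt3}{4}\sin t$ is the $\ell=2$ instance of the estimate the paper proves for \emph{every} $\ell$. The gap is in the tail absorption, and it is fatal rather than cosmetic. The inequality is nearly saturated not only at leading order $u^{4}$ but also at order $u^{6}$: take $s\to1^{+}$ (divide through by $s-1$, so $\binom{s}{2\ell}/(s-1)\to\frac{1}{2\ell(2\ell-1)}$), $a=\pi/6$, $t\to0$, $y\to0$ (so $u\approx2y$, $y^{2}\approx u^{2}/4$). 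Then your $\ell=2$ bound consumes $\frac{\sqrt3}{16}u^{4}\sin t$, the right-hand side is $\frac{\sqrt3}{16}u^{4}\sin t+\frac{\sqrt3}{32}u^{6}\sin t+O(u^{8})$, so the available slack at order $u^{6}$ is $\frac{\sqrt3}{32}u^{6}\sin t$, while the true $\ell=3$ term is $\frac{1}{30}\cdot6\cos^{5}(\pi/6)\sin(\pi/6)\,u^{6}\sin t=\frac{9\sqrt3}{320}u^{6}\sin t$, i.e.\ already $90\%$ of that slack. Your two relaxations each overshoot this by a wide margin: replacing $\binom{s}{6}$ by $\binom{s}{4}$ loses a factor $\approx2.5$, and $v_{0}^{6}-v_{1}^{6}\le3v_{0}^{2}(v_{0}^{4}-v_{1}^{4})$ loses another factor $\approx2$ at this configuration, so your bound on the $\ell=3$ term alone is about six times the slack it must fit into. (There is a second failure at the other end: with $\binom{s}{2\ell}$ replaced by $\binom{s}{4}$ your majorizing series $\sum_{\ell\ge3}\ell u^{2\ell-4}$ blows up like $(1-u^{2})^{-2}$ as $y\to1$, while the slack stays bounded.) So the claim that the tail ``fits comfortably'' is false; the argument cannot close in this form.

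This is precisely why the paper does not lump the tail into a $y^{2}$-slack. Instead it (a) proves the sharp estimate $\cos^{2\ell}a-\cos^{2\ell}(a+t)\le2\ell\sup_{x}\bigl(\cos^{2\ell-1}x\sin x\bigr)\sin t=\sqrt{2\ell}\bigl(\tfrac{2\ell-1}{2\ell}\bigr)^{(2\ell-1)/2}\sin t$ for every $\ell\ge2$ via the cap lemma (a Sturm-type comparison with a cosine cap — this is the genuinely hard analytic step your sketch has no substitute for when $\ell\ge3$), and (b) expands the right-hand side exactly, $u^{2}y^{2}=(1-\sqrt{1-u^{2}})^{2}=\sum_{\ell\ge2}a_{\ell}u^{2\ell}$ with $a_{\ell}=2\left|\binom{1/2}{\ell}\right|$, and compares coefficients \emph{termwise} through the ratio inequality $b_{\ell+1}/b_{\ell}\le a_{\ell+1}/a_{\ell}=\frac{\ell-1/2}{\ell+1}$. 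That termwise comparison has only a few percent of room at $\ell=3$ as $s\to1^{+}$, which is consistent with the near-saturation computed above and shows that any per-term loss of a constant factor, as in your proposal, is too much. To repair your argument you would essentially have to reprove both of these ingredients, at which point you have the paper's proof.
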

\begin{proof}
Let us denote $w = \frac{2y}{1+y^{2}}$. Then $y = \frac{1-\sqrt{1-w^{2}}}{w}$, and therefore 
\begin{align*}
\left(\frac{2y}{1+y^{2}}\right)^{2} y^{2} = (1-\sqrt{1-w^{2}})^{2} = 2-2\sqrt{1-w^{2}}-w^{2} =2 \sum_{k=2}^{\infty}w^{2k} \left|\binom{1/2}{k}\right|= \sum_{k=2}^{\infty}a_{k}w^{2k},
\end{align*}
where $a_{k} =  2\left|\binom{1/2}{k}\right|$ for $k \geq 2$. Clearly $a_{2} = \frac{1}{4}$ and 
\begin{align*}
\frac{a_{k+1}}{a_{k}} = \frac{\left|\binom{1/2}{k+1}\right|}{\left|\binom{1/2}{k}\right|} = \frac{k-\frac{1}{2}}{k+1}.
\end{align*}
Thus it suffices to show that 
\begin{align*}
\sum_{\ell=2}^{\infty} \left(\cos^{2\ell}(a) - \cos^{2\ell}(a+t) \right) \binom{s}{2\ell} w^{2\ell} \leq 
\frac{\sqrt{3}}{4}\cdot \frac{s(s-1)(s-2)(s-3)}{2} \sin(t) \sum_{\ell=2}^{\infty} a_{\ell} w^{2\ell}.
\end{align*}
We have 
\begin{align*}
\cos^{2\ell}(a) - \cos^{2\ell}(a+t) = \int_{a}^{a+t} - \frac{d}{dx} \cos^{2\ell}(x) dx  = 2\ell \int_{a}^{a+t}\cos^{2\ell-1}(x) \sin(x)dx. 
\end{align*}
By Lemma~\ref{ode} proved below, the right hand side can be estimated from above by
\begin{align*}
2\ell   \sup_{x \in \mathbb{R}}\left(  \cos^{2\ell-1}(x) \sin(x)\right) \cdot \sin(t)= \sqrt{2\ell} \left(\frac{2\ell-1}{2\ell}\right)^{\frac{2\ell-1}{2}}\, \cdot \sin(t).
\end{align*}
Therefore
\begin{align*}
&\sum_{\ell=2}^{\infty} \left(\cos^{2\ell}(a) - \cos^{2\ell}(a+t) \right) \binom{s}{2\ell} w^{2\ell} \leq \\
&\sin(t)\sum_{\ell=2}^{\infty}  \sqrt{2\ell} \left(\frac{2\ell-1}{2\ell}\right)^{\frac{2\ell-1}{2}} \binom{s}{2\ell} w^{2\ell}=\sin(t)\sum_{\ell=2}^{\infty} b_{\ell} w^{2\ell},
\end{align*}
where $b_{\ell}  =  \sqrt{2\ell} \left(\frac{2\ell-1}{2\ell}\right)^{\frac{2\ell-1}{2}} \binom{s}{2\ell}$ for $\ell\geq 2$.  We have 
\begin{align*}
\frac{b_{\ell+1}}{b_{\ell}} = \frac{\sqrt{2(\ell+1)} \left(\frac{2\ell+1}{2\ell+2}\right)^{\frac{2\ell+1}{2}} \binom{s}{2\ell+2}}{ \sqrt{2\ell} \left(\frac{2\ell-1}{2\ell}\right)^{\frac{2\ell-1}{2}} \binom{s}{2\ell}}=\sqrt{\frac{\ell+1}{\ell}}\cdot  \frac{(2\ell-s)(2\ell+1-s)}{(2\ell+1)(2\ell+2)}\cdot \frac{ \left(\frac{2\ell+1}{2\ell+2}\right)^{\frac{2\ell+1}{2}}}{\left(\frac{2\ell-1}{2\ell}\right)^{\frac{2\ell-1}{2}}}.
\end{align*}
We claim that $\frac{b_{\ell+1}}{b_{\ell}} \leq \frac{\ell-\frac{1}{2}}{\ell+1}$. Indeed,  $\sqrt{\frac{1+\ell}{\ell}} \leq 1+\frac{1}{2\ell}$ and, since the mapping $n \mapsto (1-1/n)^{n-1}$ is decreasing for $n >1$, we have $ \left(\frac{2\ell+1}{2\ell+2}\right)^{\frac{2\ell+1}{2}} \leq \left(\frac{2\ell-1}{2\ell}\right)^{\frac{2\ell-1}{2}}$. Next, we notice that 
 $(2\ell-s)(2\ell+1-s) \leq (2\ell-1)2\ell$. Finally, combining these three estimates,  we obtain that 
 \begin{align*}
\frac{b_{\ell+1}}{b_{\ell}}=\sqrt{\frac{\ell+1}{\ell}}\cdot  \frac{(2\ell-s)(2\ell+1-s)}{(2\ell+1)(2\ell+2)}\cdot \frac{ \left(\frac{2\ell+1}{2\ell+2}\right)^{\frac{2\ell+1}{2}}}{\left(\frac{2\ell-1}{2\ell}\right)^{\frac{2\ell-1}{2}}} \leq \left(1+\frac{1}{2\ell} \right) \frac{(2\ell-1)2\ell}{(2\ell+1)(2\ell+2)} = \frac{\ell-\frac{1}{2}}{\ell+1}. 
\end{align*}
Telescoping the product of  $\frac{b_{k+1}}{b_{k}} \leq \frac{a_{k+1}}{a_{k}}$ ($k=2, \ldots, \ell)$, we obtain 
\begin{align*}
b_{\ell+1} \leq a_{\ell+1}  \frac{b_{2}}{a_{2}} = a_{\ell+1} 8 \left(\frac{3}{4}\right)^{\frac{3}{2}}\binom{s}{4} = a_{\ell+1} \frac{s(s-1)(s-2)(s-3)}{2}  \cdot \frac{\sqrt{3}}{4}.
\end{align*}
Therefore 
\begin{align*}
&\sin(t)\sum_{\ell=2}^{\infty} b_{\ell} w^{2\ell}\leq \sin(t) \cdot  \frac{s(s-1)(s-2)(s-3)}{2}  \cdot \frac{\sqrt{3}}{4} \sum_{\ell=2}^{\infty} a_{\ell} w^{2\ell}=\\
&\sin(t)  \cdot \frac{s(s-1)(s-2)(s-3)}{2}  \cdot \frac{\sqrt{3}}{4} \cdot \left(\frac{2y}{1+y^{2}}\right)^{2} y^{2}.
\end{align*}
Thus it remains to prove the following lemma.
\end{proof}
\begin{lemma}\label{ode}
For all $0\leq a\leq a+t\leq \frac{\pi}{2}$ and all $\ell\geq 2$, we have 
\begin{align*}
\int_{a}^{a+t}\cos^{2\ell-1}(x)\sin(x)dx \leq \sup_{x\in \mathbb{R}}\left(  \cos^{2\ell-1}(x) \sin(x)\right) \cdot \sin(t).
\end{align*}
\end{lemma}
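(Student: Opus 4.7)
My plan is to first integrate explicitly to obtain
\[
\int_a^{a+t}\cos^{2\ell-1}(x)\sin(x)\,dx = \frac{\cos^{2\ell}(a) - \cos^{2\ell}(a+t)}{2\ell},
\]
and to write $h(x) := \cos^{2\ell-1}(x)\sin(x)$ and $M := \sup_{x}h(x)$. The supremum is attained at $x^* \in (0, \pi/2)$ with $\sin^2(x^*) = 1/(2\ell)$, so the lemma reduces to proving $\cos^{2\ell}(a) - \cos^{2\ell}(a+t) \leq 2\ell M \sin t$ on the triangle $\{0 \leq a \leq a+t \leq \pi/2\}$.

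My next step is an envelope-theorem argument. Define $\Phi(t) := \max_{a \in [0, \pi/2 - t]}[\cos^{2\ell}(a) - \cos^{2\ell}(a+t)]$; since $\Phi(0) = 0$, it suffices to prove $\Phi'(t) \leq 2\ell M \cos t$. The partial derivative in $a$ is $2\ell(h(a+t) - h(a))$, which is strictly positive at $a = 0$ (because $h(0) = 0 < h(t)$ for $t \in (0, \pi/2)$) and strictly negative at $a = \pi/2 - t$ (because $h(\pi/2) = 0 < h(\pi/2 - t)$), so the maximum is attained at an interior critical point $a^*(t)$ satisfying $h(a^*(t)) = h(a^*(t) + t)$. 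Since $h$ is unimodal on $[0, \pi/2]$ with peak at $x^*$, this critical point forces $a^*(t) < x^* < a^*(t) + t$. The envelope theorem then gives $\Phi'(t) = 2\ell h(a^*(t) + t)$, so the lemma is reduced to the key sub-claim: \emph{whenever $0 \leq a < x^* < b \leq \pi/2$ with $h(a) = h(b)$, one has $h(b) \leq M \cos(b - a)$.}

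To prove this sub-claim, I will parametrize by the common level. For $r \in [0, M]$, let $\psi_1(r) \leq x^* \leq \psi_2(r)$ denote the two preimages of $r$ under $h$, and set $\Delta(r) := \psi_2(r) - \psi_1(r)$. The sub-claim becomes $\Delta(r) \leq \arccos(r/M)$ on $[0, M]$, an inequality with equality at both endpoints ($r = 0$: both sides equal $\pi/2$; $r = M$: both sides equal $0$). Using $h''(x^*) = -4\ell M$, a Taylor expansion at $r = M$ gives $\Delta(r) \sim \sqrt{2(M - r)/(\ell M)}$ whereas $\arccos(r/M) \sim \sqrt{2(M-r)/M}$, so their ratio tends to $1/\sqrt{\ell} \leq 1/\sqrt{2} < 1$ for $\ell \geq 2$. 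Near $r = 0$ the asymptotics $\psi_1(r) \sim r$ and $\psi_2(r) \sim \pi/2 - r^{1/(2\ell-1)}$ yield $\arccos(r/M) - \Delta(r) \sim r^{1/(2\ell-1)} + r(1 - 1/M) > 0$. The main technical obstacle will be ruling out an interior violation of $\Delta(r) \leq \arccos(r/M)$; I expect to handle this by establishing concavity of $\sigma(r) := \arccos(r/M) - \Delta(r)$ on $(0, M)$ via the formula $\Delta'(r) = 1/h'(\psi_2(r)) - 1/h'(\psi_1(r))$ together with careful estimates on the ratio $h'(\psi_1(r))/h'(\psi_2(r))$; combined with $\sigma(0) = \sigma(M) = 0$, this would force $\sigma \geq 0$ throughout. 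An alternative is to show that $\sigma'$ has exactly one zero in $(0, M)$, making $\sigma$ unimodal with the same conclusion.
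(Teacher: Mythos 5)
Your reduction is fine as far as it goes: the explicit antiderivative, the observation that the maximizer $a^*(t)$ of $a\mapsto\cos^{2\ell}(a)-\cos^{2\ell}(a+t)$ is interior and satisfies $h(a^*)=h(a^*+t)$, and the envelope/Danskin step (modulo the routine remark that $\Phi$ is Lipschitz, so a bound on its a.e.\ derivative integrates) correctly reduce the lemma to the sub-claim that $\Delta(r)\le\arccos(r/M)$ for all levels $r\in[0,M]$, where $\Delta(r)=\psi_2(r)-\psi_1(r)$ is the width of the superlevel set of $h$. The problem is that this sub-claim is not a technical afterthought: it \emph{is} the lemma, in the sense that it carries all of the analytic difficulty (it is exactly a comparison of $h$ with the cosine cap of height $M$), and you do not prove it. What you supply are the two endpoint checks ($r\to 0$ and $r\to M$, where your asymptotics are correct, e.g.\ $\Delta(r)\sim\sqrt{2(M-r)/(\ell M)}$ versus $\arccos(r/M)\sim\sqrt{2(M-r)/M}$) and a statement that you ``expect'' to exclude an interior violation by proving concavity of $\sigma(r)=\arccos(r/M)-\Delta(r)$, or unimodality of $\sigma'$, via estimates on $h'(\psi_1)/h'(\psi_2)$. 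Neither of these is carried out, and neither is obviously within easy reach: $\Delta'(r)=1/h'(\psi_2(r))-1/h'(\psi_1(r))$ blows up at both endpoints, so controlling $\Delta''$ (or the sign changes of $\sigma'$) requires genuine second-order information about both inverse branches of $h$, which is precisely the kind of global comparison the proof must supply. As written, the argument is therefore incomplete at its central step.

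For comparison, the paper handles this same core differently: it builds an explicit comparison function $g$ made of two cosine arcs glued at the peak $x_0$ (frequencies $2\sqrt{\ell}$ and $A_\ell$ with $1/A_\ell+1/(2\sqrt{\ell})=1$), chosen to be equimeasurable with $s\mapsto f(x_0)\cos s$ on $[0,\pi/2]$, proves the crossing structure $g\ge f$ then $g\le f$ by a Sturm--Wronskian argument using that $f''/f$ is increasing while $g''/g$ is piecewise constant, checks one endpoint area inequality, and then applies a ``cap lemma'' to compare maxima of integrals over intervals of length $t$. If you want to salvage your route, you will need an argument of comparable strength for $\Delta(r)\le\arccos(r/M)$ on all of $(0,M)$ — for instance a convexity/Sturm-type comparison of the two branches of $h^{-1}$ with those of the cosine profile — rather than endpoint asymptotics plus a conjectured concavity.
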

\begin{proof}
First, we need the following 
\begin{lemma}[Cap lemma]\label{caple}
Let $f$ and $g$ be two continuous unimodal nonnegative real valued functions  defined on $\mathbb{R}$ such that $f=0$ on $ \mathbb{R}\setminus (a,b)$, and $g=0$ on $\mathbb{R}\setminus (a',b')$.  Assume that $a'\leq a<b'\leq b$ and $x_{0} \in (a,b')$ is the point of the common global maximum of  $f$ and $g$ with  $f(x_{0})=g(x_{0})$. Suppose also that there exists $c \in (x_{0}, b')$ such that  $g(x)\geq f(x)$ on $[a',c]$ and $g(x)\leq f(x)$ on $[c,b]$ (see Fig.~\ref{shapka}).
\begin{figure}[ht]
\centering
\includegraphics[scale=1]{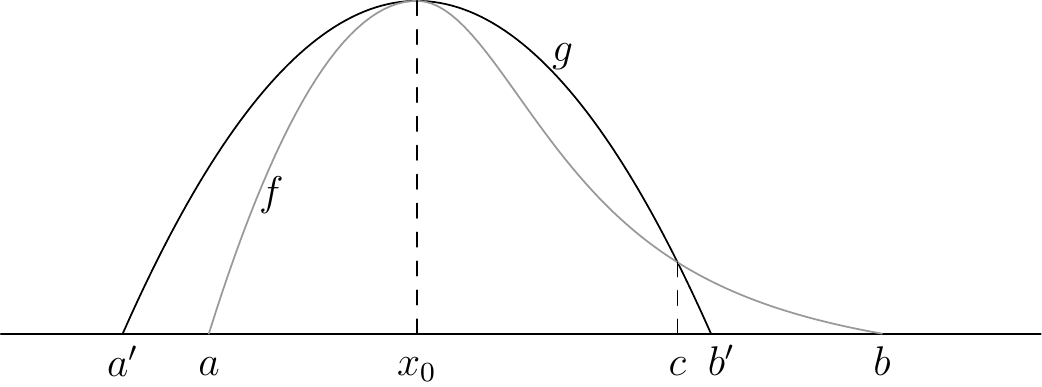}
\caption{The functions $f$ and $g$.}
\label{shapka}
\end{figure}
\par\noindent
At last assume that 
\begin{align}\label{ploshad}
\int_{x_{0}}^{b'}g \geq \int_{x_{0}}^{b}f.
\end{align}
 Then for all $t \in [0, b-a]$, we have
\begin{align}\label{vylez}
\max_{\substack{|I|=t\\ I\subset [a,b] \text{ is an interval}}} \int_{I}f \leq \max_{\substack{|I|=t \\ I \subset \mathbb{R} \text{ is an interval}}} \int_{I}g.
\end{align} 
\end{lemma}

\begin{proof}
It follows from the unimodality of $f$ that the maximum on the left hand side of (\ref{vylez}) is attained when  $I=[\alpha, \beta]$ with $\alpha \leq x_{0}$ and $\beta \geq x_{0}$. Consider two cases.  If $\beta \in [x_{0}, c]$, then there is nothing to prove because $f \leq g$ on $I$.  If $\beta \geq  c$, then we have 
\begin{align*}
\int_{I}f &= \int_{\alpha}^{x_{0}} f + \int_{x_{0}}^{\beta} f  \leq \int_{\alpha}^{x_{0}} g + \int_{x_{0}}^{\beta} f  = \int_{I}g + \int_{x_{0}}^{c}(f-g)  + \int_{c}^{\beta}(f-g) \\
&\leq \int_{I}g + \int_{x_{0}}^{c}(f-g)  + \int_{c}^{b}(f-g) = \int_{I}g +\int_{x_{0}}^{b} (f-g) \stackrel{(\ref{ploshad})}{\leq}  \int_{I}g.
\end{align*}
The second inequality follows from the fact that $f\geq g$ on $[\beta, b]$. Lemma~\ref{caple} is proved.
\end{proof}
Next, fix any integer $\ell\geq 2$. Take $a=0$, $b=\frac{\pi}{2}$, $f(x) = \cos^{2\ell-1}(x) \sin(x)$, $x_{0} = \arcsin\frac{1}{\sqrt{2\ell}}$. Redefine $f$ to be $0$ outside  $[0, \pi/2]$. To construct an appropriate $g$, we calculate the derivatives of $f$. For $x \in (0, \pi/2)$, we have 
\begin{align}
f'(x) &= -(2\ell-1)\cos^{2\ell-2}(x) \sin^{2}(x) + \cos^{2\ell}(x) =-(2\ell-1)\cos^{2\ell-2}(x)+2\ell \cos^{2\ell}(x) ;\nonumber\\
f''(x) &= (2\ell-1)(2\ell-2)\cos^{2\ell-3}(x) \sin(x) -4\ell^{2}\cos^{2\ell-1}(x) \sin(x)  \nonumber\\
& = f(x)\left(\frac{(2\ell-1)(2\ell-2)}{\cos^{2}(x)} - 4\ell^{2}\right). \label{der33}
\end{align}
In particular, we see that $f''/f$ is increasing on $[0, \pi/2]$. We have 
\begin{align*}
\frac{f''(x_{0})}{f(x_{0})}=\frac{(2\ell-1)(2\ell-2)}{1-\frac{1}{2\ell}} - 4\ell^{2} = -4\ell.
\end{align*}
This suggests that we should take 
\begin{align*}
g(x) = 
\begin{dcases}
f(x_{0}) \cos (2\sqrt{\ell} (x_{0}-x)), & x\leq x_{0},\\
f(x_{0}) \cos(A_{\ell}(x-x_{0})), & x\geq x_{0},
\end{dcases}
\end{align*}
where $A_{\ell}$ satisfies $\frac{1}{A_{\ell}} + \frac{1}{2\sqrt{\ell}} =1$. Next, let $a'\leq x_{0}$ be the largest number such that $g(a')=0$, i.e., $a' = x_{0} - \frac{\pi}{2 \cdot 2\sqrt{\ell}}$. Let $b'\geq x_{0}$ be the smallest number such that $g(b')=0$, i.e., $b' = \frac{\pi}{2 A_{\ell}} + x_{0}$. Redefine $g$ to be zero outside $(a',b')$. 

Note that  by the choice of $A_{\ell}$, $g$ is equimeasurable with the mapping $s \mapsto f(x_{0})\cos(s)$,  $s \in [0, \pi/2]$, i.e., 
$$
| \{x \in \mathbb{R}\, :\,  g(x)>\lambda\}| = |\{s \in [0,\pi/2] :\, f(x_{0})\cos(s)> \lambda \}|
$$
 for all $\lambda > 0$. Thereby, for every $t \in (0, \pi/2)$, we have 

\begin{align}\label{maxa}
\max_{\substack{|I|=t \\ I \text{\, is an interval }}} \int_{I}g &\leq \max_{\substack{|E|=t \\ E \text{\, is measurable}}}  \int_{E} g=\max_{\substack{|E'|=t, \, E'\subset [0, \pi/2] \\ E' \text{\, is measurable}}}  \int_{E'} f(x_{0})\cos(s)ds\\
&  = \int_{0}^{t} f(x_{0}) \cos(s)ds = f(x_{0})\sin(t).\nonumber
\end{align}

\begin{lemma}\label{vspom}
Functions $f$ and $g$ satisfy the conditions of the cap lemma.
\end{lemma}

\begin{proof}
Clearly both $f$ and $g$ are unimodal functions, $x_{0}$ is the point of the global maximum for $f$ and $g$, and  $f(x_{0})=g(x_{0})$. Since $\arcsin(s)< \frac{\pi}{2}s$ for every $s \in (0,1)$, we  conclude that  $a' =\arcsin(\frac{1}{2\sqrt{\ell}}) -\frac{\pi}{2} \cdot \frac{1}{2\sqrt{\ell}}<0$. The choice of $A_{\ell}$ implies that  $b'-a'=\pi/2$. Thereby $a'<0=a<x_{0}<b'<\pi/2=b$.

 Next, we need to check that 
\begin{align*}
&\int_{x_{0}}^{b'}g = f(x_{0}) \frac{1}{A_{\ell}} = \frac{1}{\sqrt{2\ell}} \left(\frac{2\ell-1}{2\ell}\right)^{\frac{2\ell-1}{2}}\left(1-\frac{1}{2\sqrt{\ell}} \right)\geq \\
&\int_{x_{0}}^{\pi/2}f = \left.\frac{1}{2\ell} \cos^{2\ell}(x)\right|_{\frac{\pi}{2}}^{x_{0}} = \frac{1}{2\ell}\left(\frac{2\ell-1}{2\ell}\right)^{\ell},
\end{align*}
i.e., that $1-\frac{1}{2\sqrt{\ell}} \geq \frac{1}{\sqrt{2\ell}} \sqrt{1-\frac{1}{2\ell}}$, 
which is indeed true even  for $\ell \geq 1$.

In order to show that $g(x) \geq f(x)$ for $x \in [a', x_{0}]$, it suffices to check the claim  $g(x)\geq f(x)$ on $[0, x_{0}]$. The claim follows from the fact that $f(x_{0})=g(x_{0})>0$, $f'(x_{0})=g'(x_{0})=0$, and $\frac{f''}{f}<\frac{g''}{g}$ on $[0,x_{0})$. Indeed, we calculate 
\begin{align*}
&\lim_{t \to x_{0}^{-}}g''(t) =f''(x_{0}) = -4\ell f(x_{0});\\ 
&f'''(x_{0})=f(x_{0})\left(\frac{2(2\ell-1)(2\ell-2)\sin(x_{0})}{\cos^{3}(x_{0})}\right) >0=\lim_{t \to x_{0}^{-}}g'''(t)
\end{align*}
(to calculate $f'''(x_{0})$ quickly, use (\ref{der33}) and the fact that $f'(x_{0})=0$). 
Therefore $g(x)>f(x)$ when $x \in (x_{0}-\varepsilon, x_{0})$ provided that $\varepsilon>0$ is sufficiently small. It follows from the piece-wise analyticity of $f$ and $g$ that the equation $f(x)=g(x)$ can have only finite number of solutions on $(0,x_{0})$. Let $x_{1}\in (0,x_{0})$ be the largest point (if it exists) such that $g\geq f$ on $(x_{1},x_{0})$ and $g<f$ on $(x_{1}-\delta, x_{1})$ for a sufficiently small $\delta>0$.  Clearly $f(x_{1})=g(x_{1})$ and $g'(x_{1})\geq f'(x_{1})$, so we have
\begin{align*}
0\leq (f'g-fg')|_{x_{1}}^{x_{0}} = \int_{x_{1}}^{x_{0}}\left(\frac{f''}{f}-\frac{g''}{g}\right)fg <0,
\end{align*}
which is a contradiction.  Thus  there is no such $x_{1}$, which implies that  $g\geq f$ on $(0,x_{0})$ and we are done.

Next, we show that there exists $c \in (x_{0}, b')$ such that $g\geq f$ on $(x_{0},c)$ and $g\leq f$ on $(c,b')$ (on $[b',\pi/2)$ we clearly have $g=0\leq f$).  Note that 
\begin{align*}
\lim_{t\to x_{0}^{+}} g''(t) = -f(x_{0})A_{\ell}^{2} = -f(x_{0})\frac{4\ell}{(2\sqrt{\ell}-1)^{2}}> -4\ell f(x_{0}) = f''(x_{0})
\end{align*}
for $\ell\geq 2$.  Thus  $g>f$ on $(x_{0},x_{0}+\varepsilon)$ provided that $\varepsilon>0$ is sufficiently small. By the piece-wise  analyticity of $f$ and $g$, the equation  $f(x)=g(x)$ has finite number of solutions on $[x_{0}, b')$. Let $x_{1}>x_{0}$ be the smallest number  such that $g\geq f$ on $(x_{0}, x_{1})$ and $g<f$ on $(x_{1}, x_{1}+\delta)$ for a sufficiently small $\delta>0$. If there were no such point, we would have $g\geq f$ on $(x_{0},b']$ and,  in particular, $0=g(b')\geq f(b')>0$, which is a  contradiction. 

If the inequality  $g\leq f$ is violated on $[x_{1},b']$, there exists a point $x_{2}\in (x_{1},b')$ such that $g\leq f$ on $(x_{1},x_{2})$ and $g>f$ on $(x_{2}, x_{2}+\delta')$ for some sufficiently small $\delta'>0$. 

Note that $f(x_{1})=g(x_{1})$ and $f'(x_{1})\geq g'(x_{1})$, so 
\begin{align*}
0\leq (f'g-fg')|_{x_{0}}^{x_{1}} = \int_{x_{0}}^{x_{1}}\left(\frac{f''}{f}-\frac{g''}{g}\right)fg,
\end{align*}
whence $\frac{f''}{f}-\frac{g''}{g}\geq 0$ somewhere on $[x_{0}, x_{1}]$ and, thereby, $\frac{f''}{f}-\frac{g''}{g}>0$  on $(x_{1},x_{2})$ (since $f''/f$ is strictly increasing and $g''/g$ is constant). On the other hand, we have  $f(x_{2})=g(x_{2})$, $f'(x_{2})\leq g'(x_{2})$ and therefore 
\begin{align*}
0\geq (f'g-fg')|_{x_{1}}^{x_{2}} = \int_{x_{1}}^{x_{2}} \left(\frac{f''}{f} - \frac{g''}{g}\right)fg >0,
\end{align*}
which is a  contradiction.  Thus we can take $c=x_{1}$. 
\end{proof}
Lemma~\ref{ode} is now completely proved. 
\end{proof}
\subsection{Sharpening Bernoulli}
Combining Lemma~\ref{utol5} and inequality (\ref{in6}), we see that it suffices to prove the inequality 
\begin{align}\label{utol8}
&\left(\frac{1+c^{2}y^{2}}{1+y^{2}}\right)^{s}+\left(\frac{1+c^{2}y^{2}}{1+y^{2}}\right)^{s}\left(\frac{2cy\cos(a+t)}{1+c^{2}y^{2}}\right)^{2}\binom{s}{2} -1 - \left(\frac{2y\cos(a)}{1+y^{2}}\right)^{2}\binom{s}{2} \geq \\
&\frac{\sqrt{3}}{4}\cdot \frac{s(s-1)(s-2)(s-3)}{2} \left(\frac{2y}{1+y^{2}}\right)^{2} y^{2}\sin(t)\nonumber
\end{align}
for all $0\leq y \leq \frac{1}{c}$, $0\leq a \leq a+t\leq \pi/2$, $s \in (1,3/2)$, where $c=c(t)$ is defined by (\ref{cic}).

Let us estimate the left hand side from below. We have 
\begin{align*}
&LHS = \left(\frac{1+c^{2}y^{2}}{1+y^{2}}\right)^{s} -1 + \binom{s}{2}\left(\frac{1+c^{2}y^{2}}{1+y^{2}}\right)^{s-1} \frac{4c^{2}y^{2}\cos^{2}(a+t)}{(1+c^{2}y^{2})(1+y^{2})} - \binom{s}{2}\frac{4y^{2}\cos^{2}(a+t)}{(1+y^{2})^{2}} +\\
 &\binom{s}{2} \frac{4y^{2}}{(1+y^{2})^{2}}\left(\cos^{2}(a+t) -\cos^{2}(a) \right).
\end{align*}
Consider the map 
\begin{align*}
h(x) = x^{s}-1+\rho x^{s-1}, \quad x>1,
\end{align*}
where $\rho\in [0,1)$. Clearly $h''(x) = (s-1)x^{s-3}(sx+\rho(s-2))>0$ when $x\geq 1$. Therefore $h$ is convex on $[1, \infty)$ and hence  $h(x) \geq \rho+(s+\rho(s-1))(x-1)$ there. Let us apply the latter inequality to the case when $x = \frac{1+c^{2}y^{2}}{1+y^{2}}\geq 1$ and 
$$
\rho = \binom{s}{2} \frac{4c^{2}y^{2}\cos^{2}(a+t)}{(1+c^{2}y^{2})(1+y^{2})}<\frac{3/2\cdot 1/2}{2} \cdot \frac{2cy}{1+c^{2}y^{2}}\cdot  \frac{2y}{1+y^{2}}\cdot c \cdot \cos^{2}(a+t)\leq \frac{3}{8}\sqrt{2}<1. 
$$
Then we can estimate 
\begin{align*}
&LHS\geq \binom{s}{2} \frac{4c^{2}y^{2}\cos^{2}(a+t)}{(1+c^{2}y^{2})(1+y^{2})} + \frac{y^{2}(c^{2}-1)}{1+y^{2}}\left(s+(s-1)\binom{s}{2} \frac{4c^{2}y^{2}\cos^{2}(a+t)}{(1+c^{2}y^{2})(1+y^{2})} \right)-\\
&\binom{s}{2}\frac{4y^{2}\cos^{2}(a+t)}{(1+y^{2})^{2}}+\binom{s}{2} \frac{4y^{2}}{(1+y^{2})^{2}}\left(\cos^{2}(a+t) -\cos^{2}(a) \right) = \\[20pt] 
&s \frac{y^{2}(c^{2}-1)}{1+y^{2}}+\binom{s}{2}\left[ \left( 1+(s-1) \frac{y^{2}(c^{2}-1)}{1+y^{2}} \right) \frac{4c^{2}y^{2}\cos^{2}(a+t)}{(1+c^{2}y^{2})(1+y^{2})} - \frac{4y^{2}\cos^{2}(a+t)}{(1+y^{2})^{2}}\right]+\\
&\binom{s}{2} \frac{4y^{2}}{(1+y^{2})^{2}}\left(\cos^{2}(a+t) -\cos^{2}(a) \right) = \\[20pt] 
&s \frac{y^{2}(c^{2}-1)}{1+y^{2}}+\binom{s}{2}\left[ (s-1)y^{2}c^{2} \frac{4y^{2}\cos^{2}(a+t) (c^{2}-1)}{(1+c^{2}y^{2})(1+y^{2})^{2}} + \frac{4y^{2}\cos^{2}(a+t) (c^{2}-1)}{(1+c^{2}y^{2})(1+y^{2})^{2}}\right]+\\
&\binom{s}{2} \frac{4y^{2}}{(1+y^{2})^{2}}\left(\cos^{2}(a+t) -\cos^{2}(a) \right) =\\[20pt] 
&s \frac{y^{2}(c^{2}-1)}{1+y^{2}}+\binom{s}{2}\left[ \left( (s-1)y^{2}c^{2} +1\right) \frac{4y^{2}\cos^{2}(a+t) (c^{2}-1)}{(1+c^{2}y^{2})(1+y^{2})^{2}} \right]+\\
&\binom{s}{2} \frac{4y^{2}}{(1+y^{2})^{2}}\left(\cos^{2}(a+t) -\cos^{2}(a) \right) =\\[20pt] 
&s \frac{y^{2}(c^{2}-1)}{(1+y^{2})^{2}}\left\{ 1+y^{2}+ \frac{(s-1)\left( (s-1)y^{2}c^{2} +1\right)}{2}  \frac{4\cos^{2}(a+t) }{1+c^{2}y^{2}} \right\}+\\
&\frac{s}{2}(s-1) \frac{4y^{2}}{(1+y^{2})^{2}}\left(\cos^{2}(a+t) -\cos^{2}(a) \right) = \\[20pt]
&\frac{s}{2}\left(\frac{2y}{1+y^{2}}\right)^{2}\times \Bigg{\{} \frac{1}{2}  (c^{2}-1) \left( 1+2(s-1)\cos^{2}(a+t) \right)- (s-1) (\cos^{2}(a)-\cos^{2}(a+t))\\
&+\frac{1}{2}(c^{2}-1)y^{2}\left(1-\frac{2(s-1)(2-s)c^{2}\cos^{2}(a+t)}{1+c^{2}y^{2}} \right)\Bigg{\}}.
\end{align*}
Combining the obtained lower bound and  inequality (\ref{utol8}), we see that  it suffices to show that 
\begin{align}\label{utol9}
&\frac{1}{2}  (c^{2}-1) \left( 1+2(s-1)\cos^{2}(a+t) \right)- (s-1) (\cos^{2}(a)-\cos^{2}(a+t))+\\
&\frac{1}{2}(c^{2}-1)y^{2}\left(1-\frac{2(s-1)(2-s)c^{2}\cos^{2}(a+t)}{1+c^{2}y^{2}} \right)\geq \frac{\sqrt{3}}{4}\cdot (s-1)(s-2)(s-3) y^{2}\sin(t)\nonumber
\end{align}
for all $0\leq y\leq \frac{1}{c}, 0\leq a \leq a+t\leq \frac{\pi}{2}$, and  $s \in (1, 3/2)$.

\subsection{Moving to the boundary and factoring: an interplay between Analysis and Algebra}
We denote $c^{2}=C\geq 1$. We multiply both sides of inequality  (\ref{utol9}) by $2$ and estimate the factor $-\frac{1}{1+c^{2}y^{2}}$ on the left hand side of (\ref{utol9}) from below by $-1$. After rearranging the terms, we see that it suffices to show the inequality 
\begin{align}\label{utol13}
 (C-1) \left( 1+2(s-1)\cos^{2}(a+t) \right)- 2(s-1) (\cos^{2}(a)-\cos^{2}(a+t))&+\\
y^{2}\times\left\{ (C-1)\left(1-2(s-1)(2-s)C\cos^{2}(a+t) \right) -  \frac{\sqrt{3}}{2}\cdot (s-1)(s-2)(s-3)\sin(t)\right\} &\geq 0.\nonumber
\end{align}
The left hand side of (\ref{utol13}) is linear in $u=y^{2} \in [0,\frac{1}{C}]$. If $y^{2}=0$, the inequality reduces to 
\begin{align*}
C-1 \geq \frac{2(s-1) (\cos^{2}(a)-\cos^{2}(a+t))}{1+2(s-1)\cos^{2}(a+t)},
\end{align*}
which, after adding $1$ to both sides, reduces to  (\ref{infl}). Therefore, by linearity it suffices to consider  the case $y^{2}=\frac{1}{C}$. After substituting $y^{2}=\frac{1}{C}$, we can rewrite the left hand side of the inequality (\ref{utol13}) as 
\begin{align*}
& (C-1) \left( 1+2(s-1)\cos^{2}(a+t) \right)- 2(s-1) (\cos^{2}(a)-\cos^{2}(a+t))+\\
& (C-1)\left(\frac{1}{C}-2(s-1)(2-s)\cos^{2}(a+t) \right) -  \frac{\sqrt{3}}{2C}\cdot (s-1)(s-2)(s-3)\sin(t) = \\[20pt]
&(C-1)\left(1+\frac{1}{C}\right) + 2(C-1)(s-1)^{2} \cos^{2}(a+t)-2(s-1)(\cos^{2}(a)-\cos^{2}(a+t))\\
&-\frac{\sqrt{3}}{2C}\cdot (s-1)(s-2)(s-3)\sin(t)\geq \\[20pt]
&(C-1)\left(1+\frac{1}{C}\right)-2(s-1)(\cos^{2}(a)-\cos^{2}(a+t)) - \frac{\sqrt{3}}{2C}\cdot (s-1)(s-2)(s-3)\sin(t).
\end{align*}
Next, notice that $\cos^{2}(a)-\cos^{2}(a+t) = \sin(t)\sin(2a+t)\leq  \sin(t)$. Therefore it suffices to show the inequality 
\begin{align}\label{fin01}
(C-1)\left(1+\frac{1}{C}\right)\geq \left(2 + \frac{\sqrt{3}}{2C}\cdot (s-2)(s-3) \right)  (s-1)\sin(t).
\end{align}
It follows from (\ref{lens}) and the cosine theorem (see Fig.~\ref{fig:dom}) that
\begin{align*}
r(t)^{2}+\left(\frac{s-1}{\sqrt{2s-1}}\right)^{2}+2r(t)\frac{s-1}{\sqrt{2s-1}}\sin(t) = \frac{s^{2}}{2s-1}.
\end{align*} 
Using the equality $C = \frac{1}{r(t)^{2}}$, we obtain 
\begin{align*}
(s-1)\sin(t) = \frac{1-r(t)^{2}}{2r(t)} \sqrt{2s-1}=\frac{(C-1)\sqrt{2s-1}}{2\sqrt{C}}, \quad C \in [1, 2s-1]. 
\end{align*}
Therefore, inequality (\ref{fin01}) simplifies to 
\begin{align*}
\sqrt{C} \left(1+\frac{1}{C}\right) \geq \left[1+\frac{\sqrt{3}}{4C}(s-2)(s-3)\frac{}{}\right] \sqrt{2s-1} ,\quad C \in [1,2s-1], \quad s \in [1,3/2].
\end{align*}
Since $\sqrt{C}\left(1+\frac{1}{C}\right) \geq \sqrt{C} \frac{2}{\sqrt{C}}=2$,  $\frac{1}{C}\leq 1$, and $\sqrt{2s-1}\leq s$, it suffices to show that
\begin{align*}
2\geq \left[1+\frac{\sqrt{3}}{4}(s-2)(s-3)\right]s. 
\end{align*}
Subtracting $s$ from both sides of this inequality and dividing by $(2-s)$, we get $1 \geq \frac{\sqrt{3}}{4}s(3-s)$, i.e., 
$s^{2}-3s+\frac{4}{\sqrt{3}} = (s-\frac{3}{2})^{2} +(\frac{4}{\sqrt{3}} - \frac{9}{4})\geq 0$ to prove. It remains to notice that $\sqrt{3} \leq \frac{16}{9}$, i.e., 
\begin{align}
3\leq \frac{256}{81}.
\end{align}   

\subsection*{Acknowledgments} The authors are grateful to both anonymous referees for helpful suggestions. 
P.I.  was partially supported by the NSF grants DMS-1856486 and CAREER-DMS-2052865. F.N.  was partially supported by the NSF grant DMS-1600239.

\end{document}